\renewcommand\a{\alpha}
\renewcommand\d{\delta}
\newcommand\la{\lambda}
\newcommand\z{\zeta}
\renewcommand\th{\theta}
\newcommand\s{\sigma}
\newcommand\x{\chi}
\newcommand\f{\phi}
\newcommand\vf{\varphi}
\renewcommand\r{\rho}
\newcommand\Om{\Omega}
\newcommand\w{\omega}
\newcommand\vD{\varDelta}
\newcommand\vL{\varLambda}
\newcommand{\vT}{\varTheta}
\newcommand{\GG}{\mathbb G}
\newcommand\SA{\mathscr{A}}
\newcommand\SC{\mathscr{C}}
\newcommand\SE{\mathscr{E}}
\newcommand\SM{\mathscr{M}}
\newcommand\SN{\mathscr{N}} 
\newcommand\SP{\mathscr{P}}
\newcommand\SH{\mathscr{H}}
\newcommand\ScS{\mathscr{S}}
\newcommand\SW{\mathscr{W}}
\newcommand\Ql{\bar{\mathbf Q}_l}
\newcommand\BP{\mathbf P}
\newcommand\BQ{\mathbf Q}
\newcommand\BF{\mathbf F}
\newcommand\BC{\mathbf C}
\newcommand\BZ{\mathbf Z}
\newcommand\BV{\mathbf V}
\newcommand\Bm{\mathbf m}
\newcommand\Bs{\mathbf s}
\newcommand\Bk{\mathbf k}
\newcommand\Bd{\mathbf d}
\newcommand\Bt{\mathbf t}
\newcommand\BLa{\boldsymbol\varLambda}
\newcommand\Bla{\boldsymbol\lambda}
\newcommand\Bmu{\boldsymbol\mu}
\newcommand\Bxi{\boldsymbol{\xi}}
\newcommand\BOm{\boldsymbol{\Omega}}
\newcommand\iv{^{-1}}
\newcommand\wh{\widehat}
\newcommand\wt{\widetilde}
\newcommand\wg{^{\wedge}}
\newcommand\ol{\overline}
\newcommand\hra{\hookrightarrow}
\newcommand\lra{\leftrightarrow}
\newcommand\lv{\prec}
\newcommand\gv{\succ}
\newcommand\lve{\preceq}
\newcommand\gve{\succeq}
\newcommand\IC{\operatorname{IC}}
\newcommand\supp{\operatorname{supp}\,}
\newcommand\Tr{\operatorname{Tr}\,}
\newcommand\ch{\operatorname{ch}}
\newcommand\uni{_{\operatorname{uni}}}
\newcommand\id{\operatorname{id}}
\newcommand\lp{\operatorname{\!\langle\!}}
\newcommand\rp{\operatorname{\!\rangle}}
\newcommand\Diag{\operatorname{Diag}}
\newcommand{\isom}{\,\raise2pt\hbox{$\underrightarrow{\sim}$}\,}
\numberwithin{equation}{section}
\newtheorem{thm}{Theorem}[section]
\newtheorem{lem}[thm]{Lemma}
\newtheorem{prop}[thm]{Proposition}
\def \para#1{\par\medskip\textbf{#1}
              \addtocounter{thm}{1}}
\def \remark#1{\par\medskip\noindent
                \textbf{Remark #1}
                \addtocounter{thm}{1}}
\def \remarks#1{\par\medskip\noindent
                \textbf{Remarks #1}
                \addtocounter{thm}{1}}
\begin{document}
\setlength{\baselineskip}{4.9mm}
\setlength{\abovedisplayskip}{4.5mm}
\setlength{\belowdisplayskip}{4.5mm}

\renewcommand{\theenumi}{\roman{enumi}}
\renewcommand{\labelenumi}{(\theenumi)}
\renewcommand{\thefootnote}{\fnsymbol{footnote}}
\renewcommand{\thefootnote}{\fnsymbol{footnote}}
\allowdisplaybreaks[2]
\parindent=20pt
\medskip
\begin{center}
 {\bf Generalized Green functions \\ 
 associated to complex reflection groups }
\\
\vspace{1cm}
Toshiaki Shoji
\\
\vspace{0.7cm}
{\it To Michel Brou\'e }
\title{}
\end{center}

\begin{abstract}
In this paper, we consider the set of 
$r$-symbols in a full generality.  We construct
Hall-Littlewood functions and Kostka functions 
associated to those $r$-symbols.  We also discuss 
a multi-parameter version of those functions.  
We show that there exists a general 
algorithm of computing the multi-parameter 
Kostka functions. 
As an application, we show  that the generalized Green 
functions of symplectic groups can be described combinatorially
in terms of our (one-parameter) Kostka functions. 

\end{abstract}

\maketitle
\markboth{SHOJI}{GREEN FUNCTIONS}
\pagestyle{myheadings}

\begin{center}
{\sc Introduction}
\end{center}

\par\medskip
Lusztig introduced in [L1] the notion of symbols in order to 
parametrize unipotent representations of finite classical groups. 
By using some modification of such symbols, he 
described in [L2] the generalized Springer correspondence for classical groups.
The notion of $r$-symbols given in [Mal], [S2, 3] is a natural generalization 
of those symbols, where the original symbols correspond to 2-symbols. 
In [S2, 3], Hall-Littlewood functions and Kostka functions associated to 
$r$-symbols (namely, associated to complex reflection groups) 
were introduced. 
Actually, those $r$-symbols correspond to a subset of Lusztig's symbols, which 
is related to the Springer correspondence, and in [S2, 3], Green functions 
associated to complex reflection groups were discussed.  
In this paper, we extend the notion of $r$-symbols so that it covers  
the full set of symbols in the case where $r = 2$. 
We define Hall-Littlewood functions and Kostka functions associated to 
those $r$-symbols, which makes it possible to define generalized Green 
functions associated to complex reflection groups.  In [S2, 3], an algorithm 
of computing Kostka functions was given.
\par
In [S4], the notion of limit symbols was introduced. It is a special case of 
$r$-symbols, and in fact, in that case the description of Kostka functions 
becomes drastically simple.  
It is known that those Kostka functions (in the case where $r = 2$) are
closely related to the Springer correspondence of exotic symmetric spaces
([AH], [K], [SS]). 
In [S6], a multi-parameter version of Kostka functions (associated to limit symbols) 
was introduced.  However, the algorithm of computing those Kostka functions were not given 
there (a straightforward generalization does not work).  
In this paper, we also consider the multi-parameter version of Kostka functions 
associated to $r$-symbols of general type.       
We show that there exists a general algorithm of computing those Kostka functions. 
Although the expression is complicated in the multi-parameter case, 
it gives a simple algorithm in the one-parameter case.  
By making use of it, we show that generalized Green functions of symplectic groups 
can be described combinatorially by our Kostka functions. 
In the case of special orthogonal groups,
a combinatorial description of generalized Green functions 
is also possible by using a generalization of 
Kostka functions defined in [S3].  We will discuss those subjects elsewhere.      

\section{$r$-symbols}

\para{1.1.}
The notion of $r$-symbols was introduced in [Mal], and 
was discussed in [S2, 3]. However, in those papers, some special 
cases of $r$-symbols were treated.  In this paper, we consider 
the $r$-symbols in a full generality. 
\par
For an integer $n \ge 0, r \ge 1$, we denote by $\SP_{n,r}$ the 
set of $r$-partitions $\Bla = (\la^{(1)}, \dots, \la^{(r)})$ such that 
$\sum_{k = 1}^r|\la^{(k)}| = n$, where $|\la|$ denotes the size of 
a partition $\la$.  If $r = 1$, we simply denote it by $\SP_n$.  
For a partition $\la$, we denote by $l(\la)$ the number of non-zero parts 
of $\la$. 
\par
We fix a sequence $\Bs = (s_1, \dots, s_r)$ of non-negative integers, 
and an integer $e$ such that $s_k \le e$ for any $k$.
For $\Bm = (m_1, \dots, m_r)\in \BZ_{\ge 0}^r$, 
we denote by $\SM(\Bm)$ the set of matrices 
$A = (a^{(k)}_j)_{1 \le k \le r, 1 \le j \le m_k}$ such that $a^{(k)}_j \in \BZ_{\ge 0}$. 
We define $\wt Z^{e, \Bs}(\Bm)$ as the set of 
$\vL = (A^{(1)}, \dots, A^{(r)}) \in \SM(\Bm)$, where 
$A^{(k)} = (a_1^{(k)}, \dots, a_{m_k}^{(k)})$ is a sequence of 
non-negative integers satisfying the following properties;
\begin{equation*}
\tag{1.1.1}
\begin{aligned}
&a_i^{(k)} - a^{(k)}_{i+1} \ge e &\quad &(1 \le k \le r, 1 \le i \le m_k - 1),  \\
&a^{(k)}_{m_k} \ge s_k &\quad &(1 \le k \le r).
\end{aligned} 
\end{equation*} 
Put $\Bm' = (m_1 + 1, \dots, m_r + 1)$. We define a shift operation
$\s : \wt Z^{e,\Bs}(\Bm) \to \wt Z^{e,\Bs}(\Bm')$ as follows; 
for $\vL = (A^{(1)}, \dots, A^{(r)}) \in \wt Z^{e,\Bs}(\Bm)$, 
define $B^{(k)} = (b_1^{(k)}, \dots, b_{m_k +1}^{(k)})$ by
\begin{equation*}
b_i^{(k)} = \begin{cases}
              a_i^{(k)} + e &\quad\text{ if } i \le m_k, \\
              s_k           &\quad\text{ if } i = m_k + 1.
            \end{cases}
\end{equation*} 
If we put $\vL' = (B^{(1)}, \dots,B^{(r)})$, then $\vL' \in \wt Z^{e,\Bs}(\Bm')$.
We define a map $\s : \wt Z^{e,\Bs}(\Bm) \to \wt Z^{e,\Bs}(\Bm')$ by 
$\vL \mapsto \vL'$.
If we fix $\Bm$ and by applying the shift operation repeatedly, we obtain 
an infinite set $\bigcup_{\Bm'}\wt Z^{e,\Bs}(\Bm')$ on which $\s$ acts.
We denote by $Z^{e,\Bs}_{\Bd}$ the set of equivalence classes under $\s$ in 
this set.  Here $\Bd$ denotes the equivalence class of $\Bm \in \BZ^r_{\ge 0}$
under the operation $(a_1, \dots, a_r) \mapsto (a_1 + 1, \dots, a_r+1)$. 
A representative of $\Bd$ is given as follows; let $m$ be the smallest integer among 
$m_1, \dots, m_r$, and put $d_k = m_k - m$ for each $k$.  Then the sequence 
$(d_1, \dots, d_r)$ determines the class $\Bd$. We denote it as 
$\Bd = (d_1, \dots, d_r)$.      
The elements in $Z^{e,\Bs}_{\Bd}$ are called $r$-symbols of defect $\Bd$. 
\par
For $\Bm = (m_1, \dots, m_r) \in \BZ_{\ge 0}^r$, 
put  
$\vL^0(\Bm) = (\vL^{(1)}, \dots, \vL^{(r)})$, where 
\begin{equation*}
\tag{1.1.2}
\vL^{(k)} : s_k + (m_k -1)e \ge s_k + (m_k-2)e \ge \cdots \ge s_k + e \ge s_k.
\end{equation*}  
\para{1.2.}
We consider $\Bm^{\bullet} = (m_1, \dots, m_r)$, where 
\begin{equation*}
\tag{1.2.1}
m_1 = \cdots = m_{\a} = m + 1, m_{\a + 1} = \cdots = m_r = m  
\end{equation*}
for some $0 \le \a < r$ and $m \ge 0$, so that  
the defect of $\Bm^{\bullet}$ is given by 
$\Bd^{\bullet} = (1, \dots, 1, 0, \dots, 0)$ (1 : $\a$-times). 
Let $\Bm' = (m_1', \dots, m_r')$ be such that $d'_k = m_k' - m_k \ge 0$ 
for any $k$ and $\min_k\{ d'_k\} = 0$. 
Take $\vL = (a_j^{(k)}) \in \wt Z^{e,\Bs}(\Bm^{\bullet})$.  
Put, for $k = 1, \dots, r$, 
\begin{equation*}
B^{(k)} : a^{(k)}_1 + d'_ke\ge a^{(k)}_2 + d'_ke \ge \cdots \ge a^{(k)}_{m_k} + d'_ke
           \ge s_k+ (d'_k - 1)e \ge \cdots \ge s_k + e \ge s_k,
\end{equation*} 
and $\vL' = (B^{(1)}, \dots, B^{(r)})$.  Then $\vL' \in \wt Z^{e,\Bs}(\Bm')$.
The map $\vL \mapsto \vL'; \wt Z^{e,\Bs}(\Bm^{\bullet}) \to \wt Z^{e,\Bs}(\Bm')$ 
is compatible with the shift operation, and we obtain a map 
$\vf : Z^{e,\Bs}_{\Bd^{\bullet}} \to Z^{e,\Bs}_{\Bd'}$, where $\Bd'$ is the defect 
of the class containing $\Bm'$.   
Conversely, for a given $\Bm' = (m_1', \dots, m_r')$, we choose 
$\Bm^{\bullet} = (m_1, \dots, m_r)$ such that $d_k'' = m_k - m'_k \ge 0$ for any $k$, 
with $\min_k\{ d''_k\} = 0$. 
Take $\vL' \in \wt Z^{e,\Bs}(\Bm') = (b^{(k)}_j) \in \wt Z^{e,\Bs}(\Bm')$.
Put, for $k = 1, \dots, r$,
\begin{equation*}
A^{(k)} : b^{(k)}_1 + d''_ke\ge b^{(k)}_2 + d''_ke \ge \cdots \ge b^{(k)}_{m'_k} + d''_ke
           \ge s_k+ (d''_k - 1)e \ge \cdots \ge s_k + e \ge s_k, 
\end{equation*} 
and put $\vL = (A^{(1)}, \dots, A^{(r)})$.  Then $\vL \in \wt Z^{e,\Bs}(\Bm^{\bullet})$, 
and the map $\vL' \mapsto \vL$ induces a map 
$\psi : Z^{e,\Bs}_{\Bd'} \to Z^{e,\Bs}_{\Bd^{\bullet}}$. 
Since $\vf$ and $\psi$ are inverse each other, we have a bijection 
$\vf : Z^{e,\Bs}_{\Bd^{\bullet}} \isom Z^{e,\Bs}_{\Bd'}$. 
\par
For each integer $n \ge 0$, 
we choose $m$ such that $m \ge n$, and 
define $\wt Z^{e,\Bs}_{n}(\Bm^{\bullet})$ as the 
subset of $\wt Z^{e,\Bs}(\Bm^{\bullet})$ consisting of $\vL$ such that 
$\vL = \vL^0(\Bm^{\bullet}) + \Bla$, where $\Bla \in \SP_{n,r}$ with 
$\Bla \in \SM(\Bm^{\bullet})$.   
$\wt Z^{e,\Bs}_n(\Bm^{\bullet})$ is compatible with the shift operation, and 
we can define a subset $Z^{e,\Bs}_{n, \Bd^{\bullet}}$ of $Z^{e,\Bs}_{\Bd^{\bullet}}$. 
Let $\Bm' = (m_1', \dots, m_r') \in \BZ_{\ge 0}^r$ be such that 
$\sum_im_i' \equiv \a \pmod r$.  We assume that $\Bm'$ belongs to the defect $\Bd'$. 
Then, by replacing $\Bm'$ by its shift, we may assume that 
$d_k' = m_k' - m_k \ge 0$ with $\min_k\{ d'_k\} = 0$.  Put  
\begin{equation*}
\tag{1.2.2}
f(\Bd) = \sum_{k = 1}^{\a} d'_k(d'_k - 1)/2 + \sum_{k = \a+1}^rd'_k(d'_k + 1)/2.
\end{equation*}
Using the bijection $\vf : Z^{e,\Bs}_{\Bd^{\bullet}} \isom Z^{e,\Bs}_{\Bd}$, 
we define a subset $Z^{e,\Bs}_{n,\Bd}$ of $Z^{e,\Bs}_{\Bd}$ as 
$Z^{e,\Bs}_{n,\Bd} = \vf(Z^{e,\Bs}_{n',\Bd^{\bullet}})$, where $n = n' + f(\Bd)$. 
The elements in $Z^{e,\Bs}_{n,\Bd}$ are called symbols of rank $n$ and defect $\Bd$. 
It follows from the construction, the map $\Bla \mapsto \vf(\vL^0(\Bm^{\bullet}) + \Bla)$ 
gives a bijection $\SP_{n',r} \simeq Z^{e,\Bs}_{n', \Bd^{\bullet}} \simeq Z^{e,\Bs}_{n,\Bd}$.
Thus we have 
\par\medskip\noindent
(1.2.3) \ There is a bijection $\SP_{n',r} \isom Z^{e,\Bs}_{n,\Bd}$, 
where $n = n' + f(\Bd)$. 
We denote this bijection by $\Bla \mapsto \vL(\Bla)$. 
\par\medskip
For $\Bla = (\la^{(1)}, \dots, \la^{(r)}) \in \SP_{n',r}$ as above,
we fix an injective map $\th : \SP_{n',r} \to \SP_{n,r}$.  For example,  
we write $\la^{(k)} = (\la^{(k)}_1, \dots, \la^{(k)}_{m'_k})$ for each $k$, 
and define a partition $\mu^{(k)}$ by 
\begin{equation*}
\tag{1.2.4}
\mu^{(k)} : \la^{(k)}_1 + (d'_k + \d -1) \ge \la^{(k)}_2 + (d'_k + \d -2) \ge 
              \cdots \ge \la^{(k)}_{m'_k},
\end{equation*}
where $\d = 0$ for $1 \le k \le a$ and $\d = 1$ for $a + 1 \le k \le r$.
Then $\Bmu = (\mu^{(1)}, \dots, \mu^{(r)}) \in \SP_{n,r}$.  We define 
a map $\th : \SP_{n',r} \to \SP_{n,r}$ by $\Bla \mapsto \Bmu$ with $\Bmu \in \SM(\Bm')$. 
\par
Note that for a fixed $n$, the defect $\Bd$ such that 
$Z^{e,\Bs}_{n,\Bd} \ne \emptyset$ is only finite by (1.2.3). 
Let $D$ be a subset of the set of all defects $\Bd$ 
such that $Z^{e,\Bs}_{n, \Bd} \ne \emptyset$.
Put 
\begin{equation*}
\tag{1.2.5}
Z^{e,\Bs}_{n, D} = \coprod_{\Bd \in D}Z^{e,\Bs}_{n, \Bd}.
\end{equation*}
Hereafter, by fixing $e, \Bs$ (and $a$ in (1.2.1)), we simply write 
$Z^{e,\Bs}_{n, D}$, $Z^{e,\Bs}_{n, \Bd}$, etc.,  as $Z_{n,D}$, $Z_{n,\Bd}$, etc.
if there is no fear of confusion.
For a symbol $\vL \in Z_{n,D}$, we denote by $d(\vL) \in D$ the defect of $\vL$. 

\para{1.3.}
Two elements $\vL, \vL' \in Z_{n,D}$ are called similar if for 
suitable representatives $\vL \in \wt Z_n(\Bm)$, $\vL' \in \wt Z_n(\Bm')$, 
all the components contained in $\vL$ and $\vL'$ coincide each other 
with multiplicities.  This gives an equivalence relation on $Z_{n,D}$, and 
an equivalence class in $Z_{n,D}$ is called a similarity class. 
\par
For $\Bm = (m_1, \dots, m_r)$ such that $\sum_im_i \equiv \a \pmod r$, 
we define an $a$-function $a : \wt Z(\Bm) \to \BZ$ by 
\begin{equation*}
\tag{1.3.1}
a(\vL) = \sum_{a, a' \in \vL}\min (a,a') 
          - \sum_{b,b' \in \vL^0(\Bm^{\bullet})}\min(b,b'),
\end{equation*}
where $\Bm^{\bullet} = (m+1, \dots, m)$ is the unique element such that 
$\sum_im_i = \a + rm$. 
(Note that this $\Bm^{\bullet}$ is different from $\Bm^{\bullet}$ used
to define the rank of $\vL$ in 1.2.)
If we write $\vL = (a^{(k)}_j) \in \SM(\Bm)$,  
we have $s_k \le a^{(k')}_j + e$ for any $k,k',j$ by our assumption that 
$s_k \le e$. 
This implies that $a(\vL)$ is invariant under the shift operation, and 
one can define $a(\vL)$ for $\vL \in Z_{n,D}$. 
It is clear that $a$-function is constant on each similarity class in $Z_{n,D}$. 

\par\bigskip
\section{Symmetric functions associated to $r$-partitions}

\para{2.1.}
We fix $\Bm = (m_1, \dots, m_r)$, and consider finitely many variables
$x_{\Bm} = \{ x^{(k)}_j \mid 1 \le k \le r, 1 \le j \le m_k\}$.  We denote by 
$\Xi_{\Bm}$ the ring of symmetric polynomials
\begin{equation*}
\Xi_{\Bm} = \bigotimes_{k = 1}^r\BZ[x_1^{(k)}, \dots, x_{m_k}^{(k)}]^{S_{m_k}}.
\end{equation*}
Let $\Xi_{\Bm} = \bigoplus_{n \ge 0}\Xi^n_{\Bm}$ be the decomposition of 
$\Xi_{\Bm}$ into homogeneous parts. 
Put $\Bm' = (m_1', \dots, m_r')$ with $m_k' = m_k +1$, and define a linear map 
$\r^n_{\Bm,\Bm'} : \Xi^n_{\Bm'} \to \Xi^n_{\Bm}$ by 
$x^{(k)}_{m_k +1} = 0$, and by $x^{(k)}_j \mapsto x^{(k)}_j$ for $1 \le j \le m_k$.  
The operation $\r^n_{\Bm,\Bm'}$ is called a shift operation on the symmetric 
polynomials. 
As the inverse limit of the $\BZ$-module 
$\Xi_{\Bm}$ with respect to the shift operation $\r^n_{\Bm, \Bm'}$, we obtain 
\begin{equation*}
\wh\Xi^{n, \Bd} = \lim_{\substack{\longleftarrow  \\ \Bm }}\Xi^n_{\Bm},
\end{equation*} 
where $\Bd$ is the defect determined from $\Bm$ by the shift operation (see 1.1). 
The elements 
in $\wh\Xi^{n,\Bd}$ are called symmetric functions of degree $n$ with defect $\Bd$. 
\par
For a partition $\la$ of $n$ with $l(\la) \le m$, we can define 
a Schur polynomial $s_{\la}(y_1, \dots, y_m)$
and a monomial symmetric polynomial $m_{\la}(y_1, \dots, y_m)$.  
We assume that $\Bm = (m_1, \dots, m_r)$ with $m_k \ge n$ for any $k$. 
For $\Bla = (\la^{(1)}, \dots, \la^{(r)}) \in \SP_{n,r}$, we define 
$s_{\Bla}(x_{\Bm}), m_{\Bla}(x_{\Bm}) \in \Xi^n_{\Bm}$ by 
\begin{equation*}
s_{\Bla}(x_{\Bm}) = \prod_{k = 1}^rs_{\la^{(k)}}(x_1^{(k)}, \dots, x_{m_k}^{(k)}), 
\qquad
m_{\Bla}(x_{\Bm}) = \prod_{k=1}^rm_{\la^{(k)}}(x_1^{(k)}, \dots, x_{m_k}^{(k)}).
\end{equation*}
Then $s_{\Bla}(x_{\Bm}), m_{\Bla}(x_{\Bm}) \in \Xi^n_{\Bm}$ 
are compatible with the shift operation, and one can define 
the symmetric functions $s_{\Bla}(x), m_{\Bla}(x) \in \wh\Xi^{n,\Bd}$.  
As in the classical case, the sets $\{ s_{\Bla} \mid \Bla \in \SP_{n,r}\}, 
\{ m_{\Bla} \mid \Bla \in \SP_{n,r}\}$ give the $\BZ$-basis of $\wh\Xi^{n, \Bd}$. 
In particular, $\wh\Xi^{n, \Bd} \simeq \wh\Xi^{n, \bold 0}$ as $\BZ$-modules, where 
$\wh\Xi^{n,\bold 0}$ is the symmetric functions with defect zero. 

\para{2.2.}
Next we define power sum symmetric functions $p_{\Bla}(x)$. 
Note that the notation here is modified from the notation in [S2].
We fix a primitive $r$-th root $\z$ of unity in $\BC$. 
Consider the variables $x_{\Bm}$ for $\Bm = (m_1, \dots, m_r)$.
We define, for each integer $s\ge 1$ and $1 \le k \le r$, 
a symmetric polynomial $p_s^{(k)}(x_{\Bm})$ by 
\begin{equation*}
\tag{2.2.1}
p^{(k)}_s(x_{\Bm}) = 
     \sum_{j = 1}^r\z^{(k-1)(j-1)}p_{s}(x^{(j)}_1, \dots, x^{(j)}_{m_j}),
\end{equation*}
where $p_s(y_1, \dots, y_m) = \sum_i y_i^s$ is the classical 
power sum symmetric polynomial.  Put $p_s(x_{\Bm}) = 1$ for $s = 0$. 
For each $\Bla \in \SP_{n,r}$, we define a polynomial $p_{\Bla}(x_{\Bm})$ by 
\begin{equation*}
p_{\Bla}(x_{\Bm}) = \prod_{k = 1}^r\prod_{j = 1}^{m_k}
             p^{(k)}_{\la^{(k)}_j}(x_{\Bm}).
\end{equation*}   
Then $p_{\Bla} \in \Xi^n_{\Bm} \otimes_{\BZ}\BC$, compatible with the shift operation, 
and one can define a symmetric function $p_{\Bla}(x)$.  
$\{ p_{\Bla} \mid \Bla \in \SP_{n,r}\}$ gives a $\BC$-basis of 
$\wh\Xi^{n,\Bd}\otimes_{\BZ}\BC$. 

\para{2.3.}
Following [S6], we introduce $r$ parameters $t_1, \dots, t_r$, 
and consider the symmetric functions 
with parameters $\Bt = (t_1, \dots, t_r)$. 
Put $\BZ[\Bt] = \BZ[t_1, \dots, t_r]$, the polynomial ring with 
variables $t_1, \dots, t_r$, and $\BQ(\Bt) = \BQ(t_1, \dots, t_r)$ 
the field of rational functions with variables $t_1, \dots, t_r$. 
We denote 
$\Xi^n_{\Bm}\otimes_{\BZ}\BZ[\Bt], \wh\Xi^{n, \Bd}\otimes_{\BZ}\BZ[\Bt]$, etc., 
by $\Xi^n_{\Bm}[\Bt], \wh\Xi^{n, \Bd}[\Bt]$, etc..   
Also put $\wh\Xi^{n, \Bd}\otimes_{\BZ}\BQ(\Bt) = \wh\Xi^{n,\Bd}_{\BQ}(\Bt)$ and
$\wh\Xi^{n,\Bd}\otimes_{\BZ}\BC(\Bt) = \wh\Xi^{n,\Bd}_{\BC}(\Bt)$. 
We consider the one-parameter case also, which we denote by 
$\Xi^n_{\Bm}[t]$, etc. for the parameter $t$.  
We use the notation $\Bt_0 = (t, \dots, t)$, so that the specialization
$\Bt \mapsto \Bt_0$ gives $\Xi^n_{\Bm}[\Bt_0] = \Xi^n_{\Bm}[t]$. 
\par
In the rest of this section, we assume that $\Bd = \bold 0$, namely
$\Bm = (m, \dots, m)$ for some $m > 0$. 
\par
Following [S2], we introduce polynomials (with one-parameter $t$) 
$q_{s, \pm}^{(k)}(x_{\Bm};t) \in \Xi^s_{\Bm}[t]$.
(But note that the definition here is modified from  (2.2.1) in [S2], 
based on  Remark 5.7 in [S3].  Also note that since we assume 
that $\Bd = \bold 0$, the discussion is simplified from there). 
For any integer $s \ge 1$, we define (depending on the signature 
``$+$'' or ``$-$''), 
\begin{equation*}
\tag{2.3.1}
q_{s,\pm}^{(k)}(x_{\Bm};t) = \sum_{i = 1}^m(x_i^{(k)})^{s-1}
            \frac{\prod_{j \ge 1}(x_i^{(k)} - tx_j^{(k \mp 1)})}
                 {\prod_{j \ne i}(x_i^{(k)} - x_j^{(k)})},
\end{equation*}
and put $q_{s, \pm}^{(k)}(x_{\Bm};t) = 1$ if $s = 0$.  Note in the 
formula (2.3.1), we regard $k \mp 1$ as an element in $\BZ/r\BZ$.
Let $u$ be another variable. By using Lagrange's interpolation, 
we have a formula (see [S2, Lemma 2.3])
\begin{equation*}
\prod_{i = 1}^m\frac{1 - tux_i^{(k \mp 1)}}{1 - ux_i^{(k)}}
        = 1 + \sum_{i=1}^m\frac{ux_i^{(k)} - tux_i^{(k \mp 1)}}
              {1 - ux_i^{(k)}}\prod_{j \ne i}
               \frac{x_i^{(k)} - tx_j^{(k \mp 1)}}{x_i^{(k)} - x_j^{(k)}}. 
\end{equation*} 
It follows from this that
\begin{equation*}
\tag{2.3.2}
\sum_{s = 0}^{\infty}q^{(k)}_{s,\pm}(x_{\Bm};t)u^s = \prod_{i = 1}^m
                 \frac{1 - tux_i^{(k \mp 1)}}{1 - ux_i^{(k)}}.
\end{equation*}
In particular, we see that $q^{(k)}_{s, \pm}(x_{\Bm};t) \in \Xi^{s}_{\Bm}[t]$. 
One can check that $q^{(k)}_{s,\pm}(x_{\Bm};t)$ is compatible with the shift 
operation. 
\par
Following [S6], we define, for $\Bla \in \SP_{n,r}$, 
a polynomial $q^{\pm}_{\Bla}(x_{\Bm};\Bt)$
by 
\begin{equation*}
\tag{2.3.3}
q^{\pm}_{\Bla}(x_{\Bm};\Bt) = \prod_{k \in \BZ/r\BZ}\prod_{i = 1}^m
                             q^{(k)}_{\la^{(k)}_i, \pm}(x_{\Bm};t_{k-c}),   
\end{equation*} 
where $c = 1$ for the ``$+$''-case, and $c = 0$ for the ``$-$''-case. 
We see that $q^{\pm}_{\Bla} \in \Xi^n_{\Bm}[\Bt]$, and $q^{\pm}_{\Bla}$ 
is compatible with the shift operation.  Hence we obtain symmetric functions 
$q^{\pm}_{\Bla}(x;\Bt) \in \Xi^{n}[\Bt]$. 
Note that $q^{\pm}_{\Bla}(x;\Bt_0)$ coincides with $q^{\pm}_{\Bla}(x;t)$ 
introduced in [S2, 2.2]. 

\para{2.4.}
We prepare two types of infinitely many variables 
$x = (x^{(1)}, \dots, x^{(r)})$ and $y = (y^{(1)}, \dots, y^{(r)})$, and 
consider the product
\begin{equation*}
\tag{2.4.1}
\Om(x,y;\Bt) = \prod_{k \in \BZ/r\BZ}\prod_{i, j}
             \frac{1 - t_kx^{(k)}_iy^{(k + 1)}_j}{1 - x_i^{(k)}y_j^{(k)}}.
\end{equation*} 
(Note that this formula was introduced in [S6]. 
A one-parameter version appears in [S2, Prop. 2.5],  but the present version is modified 
from there, following Remark 5.7 in [S3]). 
The following formula appears in [S6, Prop. 2.8], which is a multi-parameter 
version of [S2, Prop. 2.5.], and is proved similarly.  

\begin{prop} 
Under the notation above, the following formulas hold. 
\begin{align*}
\tag{2.5.1}
\Om(x,y;\Bt) &= \sum_{\Bla}q^+_{\Bla}(x;\Bt)m_{\Bla}(y) 
              = \sum_{\Bla}m_{\Bla}(x)q^-_{\Bla}(y;\Bt), \\
\end{align*} 
where $\Bla$ runs over all the $r$-partitions. 
\end{prop}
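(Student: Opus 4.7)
The plan is to derive both identities by repeated application of the generating function formula (2.3.2): one gathers the factors of $\Om(x,y;\Bt)$ attached to each individual variable $y^{(k)}_j$ (respectively $x^{(k)}_i$) and rewrites them as a one-variable power series with coefficients $q^{(k)}_{s,+}$ (respectively $q^{(k)}_{s,-}$). Since $q^{\pm}_{\Bla}$, $m_{\Bla}$ and $\Om$ are all compatible with the shift operation, it suffices to establish each identity in $\Xi^{n}_{\Bm}[\Bt]$ for a single $\Bm = (m,\dots,m)$ with $m \ge n$, and then pass to the inverse limit defining $\wh\Xi^{n,\bold 0}[\Bt]$.

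For the first equality I would start by singling out the factors of $\Om(x,y;\Bt)$ in (2.4.1) that contain the variable $y^{(k)}_j$. A direct inspection shows that these come from the $(k-1)$-th numerator block and the $k$-th denominator block, contributing
\begin{equation*}
\prod_{i}\frac{1 - t_{k-1}\,y^{(k)}_j x^{(k-1)}_i}{1 - y^{(k)}_j x^{(k)}_i},
\end{equation*}
which by (2.3.2), applied with $u = y^{(k)}_j$ and parameter $t_{k-1}$, equals $\sum_{s \ge 0} q^{(k)}_{s,+}(x_{\Bm};t_{k-1})\,(y^{(k)}_j)^s$. Substituting back gives
\begin{equation*}
\Om(x,y;\Bt) \;=\; \prod_{k \in \BZ/r\BZ}\prod_{j=1}^{m}\,\sum_{s \ge 0}q^{(k)}_{s,+}(x_{\Bm};t_{k-1})\,(y^{(k)}_j)^s.
\end{equation*}
Expanding the product over $j$ yields a sum indexed by tuples of non-negative integers $(\nu^{(k)}_j)_{j}$; for each $k$, I would regroup these compositions by the partition $\la^{(k)}$ formed by their nonzero entries, using (i) that $\prod_j q^{(k)}_{\nu^{(k)}_j,+}$ depends only on the multiset of the $\nu^{(k)}_j$, and (ii) the definition $m_{\la^{(k)}}(y^{(k)}) = \sum_{\alpha}\prod_j (y^{(k)}_j)^{\alpha_j}$, the sum taken over distinct rearrangements $\alpha$ of $\la^{(k)}$. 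This produces $\sum_{\la^{(k)}}\bigl(\prod_i q^{(k)}_{\la^{(k)}_i,+}(x;t_{k-1})\bigr)m_{\la^{(k)}}(y^{(k)})$ for each $k$; taking the product over $k$ and comparing with the definition (2.3.3) in the case $c=1$ delivers the first identity.

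The second equality follows by the dual computation: instead of $y^{(k)}_j$, one gathers the factors of $\Om$ that contain a fixed $x^{(k)}_i$, obtaining $\prod_{j}(1 - t_k x^{(k)}_i y^{(k+1)}_j)/(1 - x^{(k)}_i y^{(k)}_j)$, and then applies (2.3.2) in its ``$-$'' form with $u = x^{(k)}_i$ and parameter $t_k$ (the roles of $x$ and $y$ being interchanged). The shift from $t_{k-1}$ to $t_k$ between the two cases reflects exactly the $c=1$ versus $c=0$ convention in (2.3.3). The argument is largely a bookkeeping exercise; the only point demanding real care is the cyclic twist $k \mp 1 \in \BZ/r\BZ$ and the correct matching of the parameter indices $t_{k-c}$ on both sides, together with verifying that the identification of compositions with partitions correctly produces $m_{\Bla}(y) = \prod_k m_{\la^{(k)}}(y^{(k)})$. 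Once these conventions are pinned down in agreement with (2.3.1)--(2.3.3), I do not anticipate any serious obstacle.
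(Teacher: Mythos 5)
Your proposal is correct and follows essentially the same route as the paper's (the paper defers the proof to [S6, Prop. 2.8] and [S2, Prop. 2.5], where the argument is exactly this Cauchy-kernel expansion: isolate the factors containing each single variable, apply the generating function (2.3.2), and regroup compositions into partitions to produce $m_{\Bla}$). The parameter bookkeeping you describe — $t_{k-1}$ with $q^{(k)}_{s,+}$ in the $y$-expansion and $t_k$ with $q^{(k)}_{s,-}$ in the $x$-expansion — matches the convention $c=1$ versus $c=0$ in (2.3.3), so no gap remains.
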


\para{2.6.}
In the one-parameter case, $\Om(x,y;t)$ is also expressed by 
using power sum symmetric functions as in [S2, (2.5.2)].  We shall show,  
in the multi-parameter case, still $\Om(x,y,\Bt)$ can be expressed 
by power sum symmetric functions.  
\par
If we denote by $p_m^{[k]}(x)$ the power sum symmetric 
function $\sum_i(x_i^{(k)})^m$ with respect to the variables $x^{(k)}$, 
(2.2.1) implies that 
\begin{equation*}
p^{(a)}_m(x) = \sum_{1 \le k \le r}\z^{(a-1)(k-1)}p^{[k]}_m(x).
\end{equation*}
We consider a matrix $\vD = (\z^{(a-1)(k-1)})_{1\le a, k \le r}$ with 
$\det \vD = \prod_{0 \le j < i < r}(\z^i - \z^j) \ne 0$. 
Let $S(\Bt) = \Diag(t_1, \dots, t_r)$ be a diagonal matrix of degree $r$, and 
put
\begin{equation*}
\tag{2.6.1}
\vD (\Bt) = \vD S(\Bt)\vD\iv.
\end{equation*}
We write $\vD(\Bt) = (\vD_{a,a'}(\Bt))_{1 \le a,a'\le r}$, where 
$\vD_{a,a'}(\Bt) \in  \BC(\Bt)$. 
For any integer $m \ge 0$, put $\Bt^m = (t_1^m, \dots, t_r^m)$. 
Then we have
\begin{equation*}
\tag{2.6.2}
\sum_{k}t_{k}^m \z^{(a-1)(k-1)}p_m^{[k]}(x) = 
        \sum_{1 \le a' \le r}\vD_{a, a'}(\Bt^m)p_m^{(a')}(x). 
\end{equation*}

\par
Let $\SP_{n,r}^2$ be the set of 
$\Bxi = (\xi^{(i,j)})_{1 \le i,j \le r}$, where $\xi^{(i,j)}$ are partitions
such that $\sum_{i,j}|\xi^{(i,j)}| = n$.
For $\Bxi \in \SP_{n,r}^2$, we define 
$\Bxi' = (\xi'^{(1)}, \dots, \xi'^{(r)}) \in \SP_{n,r}$ by 
$\xi'^{(k)} = \bigcup_{1 \le j \le r}\xi^{(k,j)}$, 
and define $\Bxi'' = (\xi''^{(1)}, \dots, \xi''^{(r)}) \in \SP_{n,r}$
by $\xi''^{(k)} = \bigcup_{1 \le i \le r}\xi^{(i,k)}$.
(Here for partitions $\la, \mu$, we denote by $\la\cup \mu$ the partition 
of $|\la| + |\mu|$ obtained by rearranging the parts of $\la$ and $\mu$ 
in a decreasing order). 
For $\Bxi \in \SP_{n,r}^2$, we define $z_{\Bxi}(\Bt)$ by 
\begin{equation*}
\tag{2.6.3}
z_{\Bxi}(\Bt) = z_{\Bxi}\iv\prod_{1 \le a, a' \le r}z_{\xi^{(a,a')}}(\Bt),
\end{equation*}
where, for $\xi^{(a,a')} = (\xi_1, \xi_2, \dots, \xi_{\ell})$
with $\ell = l(\xi^{(a,a')})$, 
\begin{equation*}
\tag{2.6.4}
z_{\xi^{(a,a')}}(\Bt) = \begin{cases} 
     \prod_{j = 1}^{\ell}
             (\d_{a,a'} - \z^{a-1}\vD_{a,a'}(\Bt^{\xi_j})) 
                    &\quad\text{ if } \ell > 0, \\
             1      &\quad\text{ if } \ell = 0.
                         \end{cases}
\end{equation*}
Moreover $z_{\Bxi}$ is defined as follows; 
for a partition 
$\la = (1^{m_1}, 2^{m_2}, \dots)$, put $z_{\la} = \prod_{i \ge 1}i^{m_i}m_i!$.
Then $z_{\Bxi} = r^{l(\Bxi)}\prod_{1 \le a, a' \le r}z_{\xi^{(a,a')}}$, 
where $l(\Bxi) = \sum_{a,a'}l(\xi^{(a,a')})$. 

\remark{2.7.}
$z_{\xi^{(a,a')}}(\Bt_0)$ is zero unless $a = a'$, hence 
$z_{\Bxi}(\Bt_0) = 0$ unless $\Bxi = (\xi^{(a,a')})$ with 
$\xi^{(a,a')} = \emptyset$ for any pair $a \ne a'$.  
Thus if $z_{\Bxi}(\Bt_0) \ne 0$, then $\Bxi$ is identified with 
an $r$-partition $\Bla = (\la^{(1)}, \dots, \la^{(r)})$ with 
$\la^{(k)} = \xi^{(k,k)}$ for each $k$.  
In this case, $z_{\Bxi}(\Bt_0)$ coincides with $z_{\Bla}(t)\iv$ 
defined in [S2, 2.4], where 
\begin{equation*}
\tag{2.7.1}
z_{\Bla}(t) = z_{\Bla}\prod_{k = 1}^rz_{\la^{(k)}}(t)
\end{equation*}
with 
\begin{equation*}
\tag{2.7.2}
z_{\la^{(k)}}(t) = \prod_{j = 1}^{l(\la^{(k)})}(1 - \z^{k-1}t^{\la_j^{(k)}})\iv.
\end{equation*}

\begin{prop}  
Under the notation above, we have
\begin{align*}
\tag{2.8.1}
\Om(x,y;\Bt) &= \sum_{\Bxi}z_{\Bxi}(\Bt)p_{\Bxi'}(x)\ol p_{\Bxi''}(y),
\end{align*} 
where $\Bxi$ runs over all the elements in $\SP_{n,r}^2$ for any $n \ge 0$, 
and $\ol p_{\Bxi''}(y)$ 
denotes the complex conjugate of $p_{\Bxi''}(y)$.  
\end{prop}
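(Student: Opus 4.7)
The natural strategy (adapting the proof of the one-parameter case [S2, Prop.~2.5]) is to pass through the logarithm and exploit Fourier inversion on $\BZ/r\BZ$. First, from the infinite product (2.4.1), expanding $-\log(1-u) = \sum_{m\ge 1} u^m/m$ gives
$$\log\Om(x,y;\Bt) = \sum_{m\ge 1}\frac{1}{m}\sum_{k\in\BZ/r\BZ}\bigl[p_m^{[k]}(x)p_m^{[k]}(y) - t_k^m\,p_m^{[k]}(x)p_m^{[k+1]}(y)\bigr],$$
where $p_m^{[k]}$ denotes the classical power sum on the block of variables $x^{(k)}$ (resp.\ $y^{(k)}$).

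Next I invert (2.2.1). The matrix $\vD = (\z^{(a-1)(k-1)})$ has inverse with $(k,a)$-entry $\tfrac{1}{r}\z^{-(a-1)(k-1)}$, so
$$p_m^{[k]}(x) = \tfrac{1}{r}\sum_a \z^{-(a-1)(k-1)}p_m^{(a)}(x), \qquad p_m^{[k]}(y) = \tfrac{1}{r}\sum_a \z^{(a-1)(k-1)}\ol{p_m^{(a)}(y)}.$$
Substituting into the first inner sum and collapsing the resulting inner sum over $k$ by the orthogonality $\sum_k\z^{n(k-1)} = r\,\d_{n,0\bmod r}$ gives the diagonal contribution $\tfrac{1}{r}\sum_a p_m^{(a)}(x)\ol{p_m^{(a)}(y)}$. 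Substituting into the second inner sum and recognizing $\tfrac{1}{r}\sum_k t_k^m \z^{(a-a')(k-1)}$ as $\vD_{a,a'}(\Bt^m)$ via (2.6.1)--(2.6.2) produces a bilinear expression which, after a relabeling of the dummy indices so that the tensor factor matches the one appearing in (2.6.4), yields
$$\log\Om(x,y;\Bt) = \sum_{m\ge 1}\frac{1}{mr}\sum_{a,a'}\bigl(\d_{a,a'}-\z^{a-1}\vD_{a,a'}(\Bt^m)\bigr)\cdot p_m^{(\cdot)}(x)\,\ol{p_m^{(\cdot)}(y)}.$$

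Finally, I exponentiate. Because the bilinear factors $p_m^{(a)}(x)\ol{p_m^{(a')}(y)}$ for distinct triples $(m,a,a')$ commute, $\Om = \prod_{m,a,a'}\exp(\cdots)$. Expanding each exponential and collecting terms by the partition $\xi^{(a,a')}$ whose multiplicities record the exponents --- this is the standard partition combinatorics that turns $\exp(\sum_m \tfrac{c_m}{m}\cdot(\cdots))$ into $\sum_{\xi}\tfrac{1}{z_\xi}\prod_j c_{\xi_j}\cdot(\cdots)$ --- one obtains, for each pair $(a,a')$, a sum with coefficient $\tfrac{1}{z_{\xi^{(a,a')}}r^{\ell(\xi^{(a,a')})}}\prod_j(\d_{a,a'}-\z^{a-1}\vD_{a,a'}(\Bt^{\xi_j}))$. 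Taking the product over $(a,a')$ gives a sum over $\Bxi\in\SP_{n,r}^2$; by (2.6.3)--(2.6.4) the total combinatorial coefficient is $z_{\Bxi}(\Bt)$, and regrouping $\prod_{a,a'}p^{(\cdot)}_{\xi^{(a,a')}}(x)\,\ol{p^{(\cdot)}_{\xi^{(a,a')}}(y)}$ row-wise and column-wise identifies it as $p_{\Bxi'}(x)\ol p_{\Bxi''}(y)$, giving (2.8.1).

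The main obstacle will be the index bookkeeping in the Fourier-inversion step. The cyclic shift $y^{(k+1)}$ in the definition of $\Om$ creates an asymmetry between the roles of the two labels on the $x$- and $y$-sides: the exponent $-(a-1)(k-1)+(a'-1)k$ that arises produces an extra factor $\z^{a'-1}$, and lands the matrix as $\vD_{a',a}$ rather than $\vD_{a,a'}$. Reconciling this with the form in (2.6.4) demands careful relabeling of summation indices (equivalently, applying the involution $\Bxi\mapsto\Bxi^{T}$ at the end, under which $\Bxi'\leftrightarrow\Bxi''$). This is the one place where sign/index errors are easy to introduce; once it is handled correctly the remaining exponentiation and assembly are purely formal.
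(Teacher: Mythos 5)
Your proposal follows essentially the same route as the paper's proof: take the logarithm of the product (2.4.1), use orthogonality on $\BZ/r\BZ$ (equivalently, inverting (2.2.1)) together with (2.6.2) to arrive at the bilinear expression $\sum_{m,a,a'}\frac{\d_{a,a'}-\z^{a-1}\vD_{a,a'}(\Bt^m)}{rm}p_m^{(a')}(x)\ol{p_m^{(a)}(y)}$, then exponentiate and apply the standard partition combinatorics to identify the coefficient with $z_{\Bxi}(\Bt)$. The index asymmetry you flag (the extra $\z^{a-1}$ from the cyclic shift and the placement of the two labels on the $x$- and $y$-sides) is exactly the bookkeeping the paper handles in (2.8.5)--(2.8.7), so your account is correct and matches the paper's argument.
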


\begin{proof}
We can write as  
\begin{equation*}
\tag{2.8.2}
\log\Om(x,y;\Bt) = \sum_k\sum_{i,j}\sum_{m = 1}^{\infty}
           \biggl(\frac{1}{m}(x_i^{(k)}y_j^{(k)})^m - 
               \frac{t_k^m}{m}(x_i^{(k)}y_j^{(k+1)})^m\biggr).
\end{equation*}  
By using the orthogonality relations of irreducible characters of $\BZ/r\BZ$, 
\begin{equation*}
\frac{1}{r}\sum_{a = 1}^r(\z^k\z^{-k'})^a = \d_{k.k'},
\end{equation*}
the right hand side of (2.8.2) can be rewritten as 
\begin{align*}
\tag{2.8.3}
\sum_{m = 1}^{\infty}\sum_{a = 1}^r\sum_{i,j, k, k'}
       &\biggl(\frac{1}{rm}\z^{(a -1)(k-1)}\z^{-(a-1)(k'-1)}(x_i^{(k)}y_j^{(k')})^m  \\
   & - \frac{t_{k}^m}{rm}\z^{(a-1)}\z^{(a-1)(k-1)}\z^{-(a-1)(k')}(x_i^{(k)}y_j^{(k'+1)})^m\biggr).
\end{align*}
By using the definition (2.2.1) of power sum symmetric functions, we have
\begin{equation*}
\tag{2.8.4}
\sum_{k,k',i,j}\frac{1}{rm}\z^{(a-1)(k-1)}\z^{-(a-1)(k'-1)}(x_i^{(k)}y_j^{(k')})^m 
     = \frac{1}{rm}p^{(a)}_m(x)\ol{p^{(a)}_m(y)}. 
\end{equation*}
On the other hand, by using (2.6.2), we have 
\begin{align*}
\tag{2.8.5}
\sum_{k,k',i,j}\frac{t_k^m}{rm}\z^{(a-1)}
         &\z^{(a-1)(k-1)}\z^{-(a-1)(k')}(x_i^{(k)}y_j^{(k'+1)})^m   \\ 
       &= \sum_{1 \le a' \le r}\frac{\z^{a-1}}{rm}
              \vD_{a,a'}(\Bt^m)p_m^{(a')}(x)\ol{p^{(a)}_m(y)}.
\end{align*}
Thus (2.8.3) can be written as 
\begin{equation*}
\tag{2.8.6}
\sum_{1\le a, a' \le r}\sum_{m = 1}^{\infty}\frac{\d_{a,a'} - \z^{a-1}\vD_{a,a'}(\Bt^m)}{rm}
                p_m^{(a')}(x)\ol{p_m^{(a)}(y)}.
\end{equation*}
Hence we have 
\begin{align*}
\Om(x,y;\Bt) = \prod_{a = 1}^r\prod_{a' = 1}^r\prod_{m = 1}^{\infty}
   \exp\biggl(\frac{\d_{a,a'} - \z^{a-1}\vD_{a,a'}(\Bt^m)}{rm}
                p_m^{(a')}(x)\ol{p_m^{(a)}(y)}\biggr).
\end{align*}
This implies that 
\begin{align*}
\tag{2.8.7}
\Om(x,y;\Bt) = \sum_{\Bxi}z_{\Bxi}(\Bt)\prod_{a,a'}\prod_j
                p^{(a')}_{\xi_j}(x)\ol{p^{(a)}_{\xi_j}(y)},
\end{align*}
where $\Bxi$ runs over all elements in $\bigcup_{n \ge 0}\SP^2_{n,r}$, 
and  we write $\Bxi = (\xi^{(a,a')})$ with $\xi^{(a,a')} = (\xi_1, \dots, \xi_k)$. 
In view of the discussion in  2.6, the proposition follows from (2.8.7). 
\end{proof}

\para{2.9.}
For any $\Bla, \Bmu \in \SP_{n,r}$, put 
\begin{equation*}
\tag{2.9.1}
z_{\Bla,\Bmu}(\Bt) = \sum_{\substack{
                \Bxi \in \SP^2_{n,r} \\ \Bxi' = \Bla, \Bxi'' = \Bmu}}z_{\Bxi}(\Bt). 
\end{equation*}
Then (2.8.1) can be rewritten as follows.
\begin{equation*}
\tag{2.9.2}
\Om(x,y;\Bt) = \sum_{\Bla, \Bmu}z_{\Bla,\Bmu}(\Bt)p_{\Bla}(x)\ol p_{\Bmu}(y), 
\end{equation*}
where $\Bla, \Bmu$ run over all $r$-partitions. 
\par
By Remark 2.7, $z_{\Bla,\Bmu}(\Bt_0) = z_{\Bla}(t)\iv$ if $\Bla = \Bmu$, 
and is equal to zero otherwise.  Hence (2.9.2) implies that
\begin{equation*}
\tag{2.9.3}
\Om(x,y;\Bt_0) = \Om(x,y;t) = \sum_{\Bla}z_{\Bla}(t)\iv p_{\Bla}(x)\ol p_{\Bla}(y), 
\end{equation*}
which coincides with the formula in [S2, (2.5.2)].

\para{2.10.}
It follows from 2.1 that $\{ s_{\Bla} \mid \Bla \in \SP_{n,r}\}$,
$\{ m_{\Bla} \mid \Bla \in \SP_{n,r}\}$ give the $\BZ[\Bt]$-basis of 
$\BZ[\Bt]$-module $\wh\Xi^{n, \bold 0}[\Bt]$. 
On the other hand, 
it is known by [S6, Lemma 2.6] that $\{ q^{\pm}_{\Bla} \mid \Bla \in \SP_{n,r}\}$
gives a $\BQ(\Bt)$-basis of $\wh\Xi^{n, \bold 0}_{\BQ}(\Bt)$. 
We define a non-degenerate bilinear form 
$\lp\ ,\ \rp : \wh\Xi^{n,\bold 0}_{\BQ}(\Bt) \times \wh\Xi^{n,\bold 0}_{\BQ}(\Bt) \to \BQ(\Bt)$
by 
\begin{equation*}
\tag{2.10.1}
\lp q^+_{\Bla}(x;\Bt), m_{\Bmu}(x)\rp = \d_{\Bla,\Bmu}.
\end{equation*}   
By using a similar argument as in [Mac, Chap.1, 4], 
we obtain the following result as a corollary of Proposition 2.5.
\begin{equation*}
\tag{2.10.2}
\lp m_{\Bla}(x), q^-_{\Bmu}(x;\Bt) \rp = \d_{\Bla,\Bmu}. 
\end{equation*}

\par\bigskip
 
\section{Hall-Littlewood functions associated to $r$-symbols}

\para{3.1.}
Hall-Littlewood functions associated to $r$-symbols were constructed 
in [S2], but the discussion there was focussed on the symbols 
of fixed defect, namely the symbols in connection with 
the Springer correspondence.  In order to extend it to 
the case of arbitrary defects, we will reformulate the construction of Hall-Littlewood 
functions in [S2], based on the discussion in [S6].  

\para{3.2.}
Take $\Bm = (m_1, \dots, m_r)$, and let $\Bd = (d_1, \dots, d_r)$, 
$\Bm^{\bullet} = (m, \dots, m)$ be as in 1.1, 1.2.
For a given integer $n \ge 0$, put $n' = n - f(\Bd)$.  
Let $\th : \SP_{n',r} \to \SP_{n,r}$ be the map defined in 1.2.
By noticing that 
$\{ s_{\Bla} \mid \Bla \in \SP_{n',r}\}$ is a basis of $\wh\Xi^{n',\bold 0}[\Bt]$, 
we define a map $\vT = \vT_{\Bd} : \wh\Xi^{n',\bold 0}[\Bt] \to \wh\Xi^{n,\Bd}[\Bt]$ by 
$s_{\Bla}(x) \mapsto s_{\th(\Bla)}(x)$.  
(Here we regard $\Bla \in \SM(\Bm^{\bullet})$ as an element 
in $\SM(\Bm)$ by a suitable addition of 0's.) 
$\vT$ gives an embedding $\wh\Xi^{n',\bold 0}[\Bt] \hra \wh\Xi^{n,\Bd}[\Bt]$.  We denote by 
$\Xi^{n,\Bd}[\Bt]$ the image of $\vT$.  
We also put $\Xi^{n,\Bd}_{\BQ}(\Bt) = \Xi^{n,\Bd}[\Bt]\otimes_{\BZ[\Bt]}\BQ(\Bt)$, 
$\Xi^{n,\Bd}_{\BC}(\Bt) = \Xi^{n,\Bd}[\Bt]\otimes_{\BZ[\Bt]}\BC(\Bt)$.  
$\vT$ can be extended to the map on these spaces, which we also denote by $\vT$. 
By (1.2.3), the map $\Bla \mapsto \vL(\Bla) = \vf(\vL^0(\Bm^{\bullet}) + \Bla)$ gives a
bijective correspondence $\SP_{n',r} \isom Z_{n,\Bd}$. 
For each $\Bla \in \SP_{n',r}$, we define  
$m_{\vL(\Bla)}, q^{\pm}_{\vL(\Bla)}$ by $m_{\vL(\Bla)} = \vT(m_{\Bla}), 
q^{\pm}_{\vL(\Bla)} = \vT(q^{\pm}_{\Bla})$. Also put $s_{\vL(\Bla)} = \vT(s_{\Bla})$.
Then $\{ s_{\vL} \mid \vL \in Z_{n,\Bd}\}$, $\{ m_{\vL} \mid \vL \in Z_{n,\Bd}\}$
give $\BZ[\Bt]$-bases of $\Xi^{n,\Bd}[\Bt]$. Moreover,  
$\{ q^{\pm}_{\vL} \mid \vL \in Z_{n,\Bd}\}$ gives a $\BQ(\Bt)$-basis of 
$\Xi^{n, \Bd}_{\BQ}(\Bt)$.
On the other hand, if we put $p_{\vL(\Bla)} = \vT(p_{\Bla})$, then 
$\{ p_{\vL} \mid \vL \in Z_{n,\Bd}\}$ gives a $\BC(\Bt)$-basis of $\Xi^{n,\Bd}_{\BC}(\Bt)$. 
\par
For $\vL = \vL(\Bla), \vL' = \vL(\Bmu)$ with $\Bla, \Bmu \in \SP_{n',r}$, 
put $z_{\vL,\vL'}(\Bt) = z_{\Bla, \Bmu}(\Bt)$ (see (2.9.1)). We define 
$\Om^{n,\Bd}(x,y;\Bt)$ by 
\begin{equation*}
\tag{3.2.1}
\Om^{n,\Bd}(x,y;\Bt) = \sum_{\vL, \vL' \in Z_{n,\Bd}}z_{\vL, \vL'}(\Bt)
       p_{\vL}(x)\ol p_{\vL'}(y). 
\end{equation*}
Note that $\ol p_{\vL(\Bla)} = \vT(\ol p_{\Bla})$ since $\vT$ is defined over $\BQ(\Bt)$. 
Thus by Proposition 2.5 and (2.9.2), we have
\begin{prop}  
Under the notation as above, 
\begin{equation*}
\tag{3.3.1}
\Om^{n,\Bd}(x,y;\Bt) = \sum_{\vL \in Z_{n,\Bd}}q^+_{\vL}(x;\Bt)m_{\vL}(y) 
                   = \sum_{\vL \in Z_{n,\Bd}}m_{\vL}(x)q^-_{\vL}(y;\Bt).
\end{equation*}
\end{prop}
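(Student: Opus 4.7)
The plan is to reduce Proposition 3.3 to Proposition 2.5, applied in rank $n'$ (where $n = n'+f(\Bd)$), by transporting everything through the embedding $\vT = \vT_{\Bd} : \wh\Xi^{n',\bold 0}[\Bt] \hra \wh\Xi^{n,\Bd}[\Bt]$ defined in 3.2. The key point is that $\vT$ intertwines the five families of interest: by construction $\vT(s_{\Bla}) = s_{\vL(\Bla)}$, $\vT(m_{\Bla}) = m_{\vL(\Bla)}$, $\vT(q^{\pm}_{\Bla}) = q^{\pm}_{\vL(\Bla)}$, $\vT(p_{\Bla}) = p_{\vL(\Bla)}$, and (since $\vT$ is defined over $\BQ(\Bt)$) also $\vT(\ol p_{\Bla}) = \ol p_{\vL(\Bla)}$. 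Moreover the coefficients match on the nose: $z_{\vL(\Bla),\vL(\Bmu)}(\Bt) = z_{\Bla,\Bmu}(\Bt)$ by the convention made in 3.2 just before (3.2.1).

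First I would decompose $\Om(x,y;\Bt)$ by bidegree. Since $q^+_{\Bla}(x;\Bt)$, $m_{\Bla}(x)$, $p_{\Bla}(x)$ each have degree $|\Bla|$ in $x$, and the corresponding factors in $y$ have degree $|\Bmu|$, the identities of Proposition 2.5 and (2.9.2) split into their homogeneous pieces. Writing $\Om^{n'}(x,y;\Bt)$ for the piece with $x$- and $y$-degree both equal to $n'$, we obtain
\begin{equation*}
\Om^{n'}(x,y;\Bt) = \sum_{\Bla,\Bmu \in \SP_{n',r}} z_{\Bla,\Bmu}(\Bt)\, p_{\Bla}(x)\,\ol p_{\Bmu}(y) = \sum_{\Bla \in \SP_{n',r}} q^+_{\Bla}(x;\Bt)\, m_{\Bla}(y) = \sum_{\Bla \in \SP_{n',r}} m_{\Bla}(x)\, q^-_{\Bla}(y;\Bt).
\end{equation*}

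Next I would apply the operator $\vT_x \otimes \vT_y$ (i.e.\ $\vT$ acting on the $x$-variables tensored with $\vT$ acting on the $y$-variables) to the first expression for $\Om^{n'}$. Using $\vT(p_{\Bla}) = p_{\vL(\Bla)}$, $\vT(\ol p_{\Bmu}) = \ol p_{\vL(\Bmu)}$, the bijection $\SP_{n',r} \isom Z_{n,\Bd}$ from (1.2.3), and the equality $z_{\vL,\vL'}(\Bt) = z_{\Bla,\Bmu}(\Bt)$, the result is exactly the right hand side of (3.2.1). Hence $\Om^{n,\Bd}(x,y;\Bt) = (\vT_x \otimes \vT_y)(\Om^{n'}(x,y;\Bt))$.

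Finally, applying $\vT_x \otimes \vT_y$ to the second and third expressions for $\Om^{n'}$, and using $\vT(q^{\pm}_{\Bla}) = q^{\pm}_{\vL(\Bla)}$ and $\vT(m_{\Bla}) = m_{\vL(\Bla)}$ together with the same bijection $\Bla \leftrightarrow \vL(\Bla)$, yields the two identities (3.3.1). The only thing to verify carefully is that $\vT_x \otimes \vT_y$ is compatible with the two-variable generating functions, i.e.\ that it respects the bidegree decomposition; this is immediate from the construction of $\vT$ as the $\BZ[\Bt]$-linear map determined by the Schur basis and the fact that the maps $\r^n_{\Bm,\Bm'}$ used to define both sides preserve degree in each set of variables separately. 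Once this compatibility is observed, the proof is a transparent transport across $\vT$, with Proposition 2.5 doing all the substantive work.
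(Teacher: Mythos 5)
Your proposal is correct and follows essentially the same route as the paper: the paper's proof is precisely the observation that $\Om^{n,\Bd}$ is by definition the image under $\vT_{\Bd}\otimes\vT_{\Bd}$ of the degree-$n'$ piece of the right-hand side of (2.9.2), so that Proposition 2.5 transports directly across the isomorphism. You have merely made explicit the bidegree decomposition and the intertwining of the bases, which the paper leaves implicit.
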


\para{3.4.}
Put $\Xi^{n,D}_{\BQ}(\Bt) = \bigoplus_{\Bd \in D}\Xi^{n,\Bd}_{\BQ}(\Bt)$, where 
$D$ is as in 1.2. 
Then $\{ s_{\vL} \mid \vL \in Z_{n,D} \}$, $\{ m_{\vL} \mid \vL \in Z_{n,D}\}$
and $\{ q^{\pm}_{\vL} \mid \vL \in Z_{n,D}\}$ give bases of $\Xi^{n,D}_{\BQ}(\Bt)$.  
If we define $\Xi^{n,D}_{\BC}(\Bt)$ similarly, $\{ p_{\vL} \mid \vL \in Z_{n,D}\}$ gives 
a basis of $\Xi^{n,D}_{\BC}(\Bt)$. 
We define a bilinear form $\lp\ ,\ \rp$ on $\Xi^{n,\Bd}_{\BQ}(\Bt)$ by 
making use of the isomorphism 
$\vT_{\Bd} : \Xi^{n',\bold 0}_{\BQ}(\Bt) \isom \Xi^{n,\Bd}_{\BQ}(\Bt)$, then extend it
to the bilinear form on $\Xi^{n,D}_{\BQ}(\Bt)$ subject to the condition that 
$\Xi^{n,\Bd}_{\BQ}(\Bt)$ and $\Xi^{n,\Bd'}_{\BQ}(\Bt)$ are orthogonal if $\Bd \ne \Bd'$. 
Also we extend this to the sesqui linear form on $\Xi^{n,D}_{\BC}(\Bt)$. 
Then by (2.10.1) and (2.10.2), we have
\begin{align*}
\tag{3.4.1}
\lp q^+_{\vL}(x;\Bt), m_{\vL'}(x)\rp = \d_{\vL,\vL'}, \quad
\lp m_{\vL}(x), q^-_{\vL'}(x;\Bt)\rp = \d_{\vL,\vL'}. 
\end{align*}

\par
The isomorphism $\vT_{\Bd} : \wh\Xi^{n',\bold 0}_{\BQ}(\Bt) \isom \Xi^{n,\Bd}_{\BQ}(\Bt)$ 
makes 
sense for $\Bt = \bold 0$, and we can define a $\BQ$-vector space 
$\Xi^{n,\Bd}_{\BQ} = \vT_{\Bd}(\Xi^{n',\bold 0}_{\BQ})$. 
Put $\Xi^{n,D}_{\BQ} = \bigoplus_{\Bd \in D}\Xi^{n,\Bd}_{\BQ}$.  Then 
$\{ s_{\vL} \mid \vL \in Z_{n,D}\}$ gives a $\BQ$-basis of $\Xi^{n,D}_{\BQ}$. 
By applying the discussion in [Mac, 1.4] for variables $x^{(k)}, y^{(k)}$ in 
the case where $\Bt = \bold 0$, we obtain the formula
\begin{equation*}
\tag{3.4.2}
\Om(x,y; \bold 0) = \prod_{1 \le k \le r}\prod_{i,j}(1 - x^{(k)}_iy^{(k)}_j)\iv
            = \sum_{\Bla}s_{\Bla}(x)s_{\Bla}(y). 
\end{equation*}
Thus, by applying $\vT_{\Bd}$ for each $\Bd$, we have
\begin{equation*}
\tag{3.4.3}
\Om^{n,\Bd}(x,y;\bold 0) = \sum_{\vL \in Z_{n,\Bd}}s_{\vL}(x)s_{\vL}(y).
\end{equation*}
We define a symmetric bilinear form $\lp \ , \ \rp_0$ on $\Xi^{n,D}_{\BQ}$ 
by $\lp s_{\vL}, s_{\vL'}\rp_0 = \d_{\vL,\vL'}$ ($\vL,\vL' \in Z_{n,D}$).  
\par
Let $\SA$ be the $\BQ[\Bt]$-subalgebra of $\BQ(\Bt)$ generated by rational functions 
on $\Bt$ which have no pole at $\Bt = \bold 0$. 
Put $\Xi^{n,\Bd}_{\SA} = \Xi^{n,\Bd}[\Bt]\otimes_{\BQ[\Bt]}\SA$, and define 
an $\SA$-submodule $\Xi^{n,D}_{\SA}$ of $\Xi^{n,D}_{\BQ}(\Bt)$ by 
$\Xi^{n,D}_{\SA} = \bigoplus_{\Bd \in D}\Xi^{n,\Bd}_{\SA}$.  
By the specialization $\Bt \mapsto \bold 0$, we have a natural map
$\Xi^{n,D}_{\SA} \to \Xi^{n,D}_{\BQ}$. Under this specialization, the bilinear form 
$\lp\ ,\  \rp$ on $\Xi^{n,D}_{\SA}$ gives rise to a bilinear form on $\Xi^{n,D}_{\BQ}$, 
which coincides with $\lp\ ,\ \rp_0$ by (3.4.3). 
 
\para{3.5.}
We consider a total order $\vL \lve \vL'$ on $Z_{n,D}$ satisfying the following properties;
\begin{enumerate}
\item
$\vL \lv \vL'$ if $a(\vL) > a(\vL')$.
\item
Each similarity class in $Z_{n,D}$ gives rise to an interval with respect to the
total order $\lve$ on $Z_{n,D}$. 
\end{enumerate} 
\par
In the discussion below, we denote by $\vL \sim \vL'$ if $\vL, \vL' \in Z_{n,D}$ 
belong to the same similarity class. 
\par
We can define Hall-Littlewood functions by making use of the total order $\lve$ 
and the bilinear form $\lp\ , \ \rp $ on $\Xi^{n,D}_{\BQ}(\Bt)$. 

\begin{prop} 
For each $\vL \in Z_{n,D}$ and a signature $\pm$, there exists a unique  
$P^{\pm}_{\vL}(x;\Bt) \in \Xi^{n,D}_{\SA}$ satisfying the following properties;
\begin{enumerate}
\item 
$P^{\pm}_{\vL}(x;\Bt)$ can be expressed as 
\begin{equation*}
\tag{3.6.1}
P^{\pm}_{\vL}(x;\Bt) = s_{\vL}(x) + \sum_{\vL' \lv \vL}u^{\pm}_{\vL,\vL'}(\Bt)s_{\vL'}(x),
\end{equation*}
where $u^{\pm}_{\vL,\vL'}(\Bt) \in \SA$, and $u^{\pm}_{\vL, \vL'}(\Bt) = 0$ if $\vL' \sim \vL$.
\item
$\lp P^+_{\vL}, P^-_{\vL'}\rp = 0$ if $\vL \not\sim \vL'$. 
\item
$P^{\pm}_{\vL}(x;\bold 0) = s_{\vL}(x)$. 
\end{enumerate}
\end{prop}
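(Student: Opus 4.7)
The plan is to proceed by induction on the similarity classes $C_1 \lv C_2 \lv \cdots \lv C_N$ of $Z_{n,D}$, which form intervals in the total order $\lve$ by 3.5 (ii). The induction is a block Gram--Schmidt procedure for the pairing $\lp\ ,\ \rp$, and its key mechanism is that this pairing specializes at $\Bt = \bold 0$ to $\lp\ ,\ \rp_0$, under which the Schur basis $\{s_{\vL}\mid \vL \in Z_{n,D}\}$ is orthonormal (see (3.4.3) and the discussion in 3.4). For the smallest class $C_1$ I set $P^{\pm}_{\vL} = s_{\vL}$; then (i) and (iii) hold trivially, and (ii) is vacuous.

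For the inductive step, assume $P^{\pm}_{\vL''}$ has been produced for every $\vL'' \in \bigcup_{k < i} C_k$, satisfying (i), (iii), and the orthogonality $\lp P^+_{\vL''}, P^-_{\vL'''}\rp = 0$ whenever $\vL'' \not\sim \vL'''$ in that range. To define $P^+_{\vL}$ for $\vL \in C_i$, I seek coefficients $u^+_{\vL, \vL'} \in \SA$ indexed by $\vL' \in \bigcup_{k < i} C_k$ such that
\[
P^+_{\vL} = s_{\vL} + \sum_{\vL' \in \bigcup_{k < i} C_k} u^+_{\vL, \vL'}\, s_{\vL'}
\]
satisfies $\lp P^+_{\vL}, P^-_{\vL'''}\rp = 0$ for every $\vL''' \in \bigcup_{k < i} C_k$. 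This is a linear system whose coefficient matrix is $M = (\lp s_{\vL'}, P^-_{\vL'''}\rp)$; by the inductive hypothesis (iii), $M(\bold 0) = (\lp s_{\vL'}, s_{\vL'''}\rp_0) = I$, so $M$ is invertible over $\SA$ and the system has a unique solution with entries in $\SA$. The same specialization shows that the right-hand side $-\lp s_{\vL}, P^-_{\vL'''}\rp$ vanishes at $\Bt = \bold 0$ (since $\vL \in C_i$ is distinct from every $\vL'''$ in a smaller class), so $u^+_{\vL, \vL'}(\bold 0) = 0$ and hence $P^+_{\vL}(x;\bold 0) = s_{\vL}(x)$. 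The element $P^-_{\vL}$ is built symmetrically by imposing $\lp P^+_{\vL''}, P^-_{\vL}\rp = 0$ for $\vL'' \in \bigcup_{k < i} C_k$; the coefficient matrix there, $N = (\lp P^+_{\vL''}, s_{\vL'}\rp)$, likewise reduces to $I$ at $\Bt = \bold 0$ by the inductive hypothesis.

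At the close of stage $i$, condition (ii) holds for every pair in $\bigcup_{k \le i} C_k$ lying in distinct classes: the pairs with exactly one entry in $C_i$ are those enforced in the construction, and the remaining ones are unchanged from the inductive hypothesis because $P^{\pm}_{\vL''}$ with $\vL'' \in \bigcup_{k < i} C_k$ are never modified. Running the induction through $i = N$ yields $P^\pm_{\vL}$ for all $\vL \in Z_{n,D}$, and uniqueness is built in, since at each stage the defining linear system has a unique solution. The only delicate point, rather than a true obstacle, is that biorthogonality genuinely couples the $+$ and $-$ families, so $P^+$ and $P^-$ cannot be produced independently of each other; the induction resolves this because at stage $i$ the orthogonality conditions used only involve $P^{\pm}$ from strictly smaller classes, which are already in hand by the inductive hypothesis.
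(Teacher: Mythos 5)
Your proposal is correct and follows essentially the same route as the paper: induction along the total order by similarity classes, constructing $P^{\pm}_{\vL}$ via a biorthogonalization whose linear system is invertible over $\SA$ because the pairing specializes at $\Bt = \bold 0$ to $\lp\ ,\ \rp_0$, for which the $s_{\vL}$ are orthonormal. The only cosmetic difference is that you expand the correction term in the Schur basis (one large system with matrix reducing to $I$ at $\Bt=\bold 0$) while the paper expands it in the already-constructed $P^{\pm}_{\vL'}$ basis, which decouples the system into blocks $B_{\SC}$ indexed by similarity classes; and your uniqueness remark, though essentially right, should make explicit that a competing family satisfying (i)--(iii) agrees with the constructed one on smaller classes by induction and hence satisfies the \emph{same} uniquely solvable linear system at each stage.
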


\begin{proof}
We shall construct the functions $P^{\pm}_{\vL}(x;\Bt)$ satisfying the properties 
(i) $\sim$ (iii) by induction on the total order $\lve$. 
Let $\vL_0$ be the minimum element with respect to $\lve$. 
By (i), $P^{\pm}_{\vL_0}$ must coincide with $s_{\vL_0}$. 
Moreover by (i), $P^{\pm}_{\vL} = s_{\vL}$ 
if $\vL \sim \vL_0$. These functions satisfy the conditions (i), (ii) and (iii).
Hence $P^{\pm}_{\vL}$ can be constructed for the similarity class containing $\vL_0$.
Now take $\vL \in Z_{n,D}$, and assume, for $\vL'\lv \vL, \vL'' \lv \vL$, that  
$P^{+}_{\vL'}, P^-_{\vL''}$ are constructed, where they satisfy the properties (i), (iii) and 
\par\medskip
(ii)$'$ : $\lp P^+_{\vL'}, P^-_{\vL''}\rp = 0$ for $\vL' \not\sim \vL''$. 
\medskip

We note, under our assumption, that   
the condition (i) is equivalent to the condition 
\begin{equation*}
\tag{3.6.2}
P^{\pm}_{\vL} = s_{\vL} + \sum_{\vL' \lv \vL}d^{\pm}_{\vL,\vL'}P^{\pm}_{\vL'},
\end{equation*}
where $d^{\pm}_{\vL,\vL'} \in \SA$ and $d^{\pm}_{\vL,\vL'} = 0$ if $\vL' \sim \vL$.
If we consider the scalar product with $P^-_{\vL''}$ on both sides of (3.6.2), 
we have, by (ii)$'$, 
\begin{equation*}
\tag{3.6.3}
\lp P^{+}_{\vL}, P^-_{\vL''}\rp = \lp s_{\vL}, P^-_{\vL''}\rp + 
                \sum_{\vL' \sim \vL''}d^+_{\vL,\vL'}\lp P^+_{\vL'}, P^-_{\vL''}\rp.
\end{equation*}
By (iii) and the discussion in 3.4, we see that 
$\lp P^+_{\vL'}, P^-_{\vL''}\rp|_{\Bt = \bold 0} = 
      \lp s_{\vL'}, s_{\vL''}\rp_0 = \d_{\vL',\vL''}$.
In particular, if we put $\SC$ the similarity class containing $\vL''$, 
the matrix 
$B_{\SC} = (\lp P^+_{\vL'}, P^-_{\vL''}\rp)_{\vL',\vL'' \in \SC}$ 
satisfies the property that $(\det B_{\SC})_{\Bt = \bold 0} \ne 0$. 
It follows that the system of linear equations with respect to the unknown 
variables $u_{\vL'}$ ($\vL' \in \SC$) 
\begin{equation*}
\tag{3.6.4}
\lp s_{\vL}, P^-_{\vL''}\rp + 
   \sum_{\vL' \in \SC}u_{\vL'}\lp P^+_{\vL'}, P^-_{\vL''}\rp = 0  
\end{equation*}
has a unique solution $u_{\vL'} = d^+_{\vL,\vL'}$. 
Since $(\det B_{\SC})_{\Bt = \bold 0} \ne 0$, we have $d^+_{\vL,\vL'} \in \SA$. 
We now define $d^+_{\vL,\vL'}$ as above for any similarity class $\SC$ containing 
$\vL''$ such that $\vL'' \lv \vL, \vL'' \not\sim \vL$, and define $P^+_{\vL}$ 
by using the formula (3.6.2). 
Then for any $\vL'\lv \vL, \vL' \not\sim \vL$, we have $\lp P^+_{\vL}, P^-_{\vL'}\rp = 0$. 
Also it follows from the construction that $d^+_{\vL, \vL'} = 0$ for $\vL' \sim \vL$.   
On the other hand, if we put $\Bt = \bold 0$ in (3.6.4), we have 
$d^+_{\vL,\vL'}(\bold 0) = 0$ 
since $\lp s_{\vL}, P^-_{\vL''}\rp|_{\Bt = \bold 0} = \lp s_{\vL}, s_{\vL''}\rp_0 = 0$.  
Hence $P^+_{\vL}(x;\bold 0) = s_{\vL}(x)$. 
Thus $P^+_{\vL}$ satisfies the condition (i), (iii) and (ii)$'$. 
\par
Instead of (3.6.3), if we compute $\lp P^+_{\vL'}, P^-_{\vL}\rp$, in a similar way 
as above we can construct 
$P^-_{\vL}$ satisfying (i), (iii) and (ii)$'$.
Thus $P^{\pm}_{\vL}$ is constructed.  
\par
Next we show the uniqueness. 
By induction, we may assume that $P^{\pm}_{\vL'}$ 
is already determined uniquely for any $\vL' \lv \vL$.
Suppose that both of $P^+_{\vL}, P'^+_{\vL}$ satisfy the condition (i) and (ii).  
Then by (i), 
\begin{equation*}
X = P^+_{\vL} - P'^+_{\vL} = 
     \sum_{\substack{\vL' \lv \vL \\ \vL' \not\sim \vL}}b_{\vL'}P^+_{\vL'}.
\end{equation*}
Hence $\lp X, P^-_{\vL'}\rp = 0$ for $\vL' \gv \vL$ or $\vL' \sim \vL$. 
On the other hand, for $\vL' \lv \vL, \vL' \not\sim \vL$, we have
$\lp P^+_{\vL}, P^-_{\vL'}\rp = 0$, 
$\lp P'^+_{\vL}, P^-_{\vL'}\rp = \lp P'^+_{\vL}, P'^-_{\vL'}\rp = 0$. 
Hence $\lp X, P^-_{\vL'}\rp = 0$. 
It follows that $\lp X, P^-_{\vL'}\rp = 0$ for any $\vL'$. 
Since $\{ P^-_{\vL'} \mid \vL' \in Z_{n,D}\}$ is a basis of $\Xi^{n,D}_{\BQ}(\Bt)$, 
and $\lp\ , \ \rp$ is non-degenerate, we see that $X = 0$, namely 
$P^+_{\vL} = P'^+_{\vL}$. The uniqueness of $P^-_{\vL}$ can be shown similarly. 
The proposition is proved. 
\end{proof}

\para{3.7.}
It follows from Proposition 3.6 (i), that $\{ P^{\pm}_{\vL} \mid \vL \in Z_{n,D}\}$ 
gives rise to an $\SA$-basis of $\Xi^{n,D}_{\SA}$. 
Let $\SC$ be a similarity class in $Z_{n,D}$.  The proof of Proposition 3.6 
shows that $B = (\lp P^+_{\vL}, P^-_{\vL'}\rp)_{\vL,\vL' \in \SC}$ is non-singular
and $(\det B)|_{\Bt = \bold 0} \ne 0$.  
Hence if we put $B\iv = B_{\SC} = (b_{\vL,\vL'}(\Bt))$, then $b_{\vL,\vL'}(\Bt) \in \SA$.  
We now define symmetric functions $Q^{\pm}_{\vL}(x;\Bt) \in \Xi^{n,D}_{\SA}$ by 
\begin{align*}
\tag{3.7.1}
Q^+_{\vL}(x;\Bt) &= \sum_{\vL' \sim \vL}b_{\vL,\vL'}(\Bt)P^+_{\vL'}(x;\Bt), \\
Q^-_{\vL}(x;\Bt) &= \sum_{\vL' \sim \vL}b_{\vL',\vL}(\Bt)P^-_{\vL'}(x;\Bt).
\end{align*}
Then $\{ Q^{\pm}_{\vL} \mid \vL \in Z_{n,D}\}$ gives rise to an $\SA$-basis of 
$\Xi^{n,D}_{\SA}$. $P^{\pm}_{\vL}$ and $Q^{\pm}_{\vL}$ are called Hall-Littlewood 
functions attached to $r$-symbols. 
\par
The orthogonality relations for $P^{\pm}_{\vL}$ in Proposition 3.6 (ii)
and the definition of $Q^{\pm}_{\vL}$ imply that 
\begin{equation*}
\tag{3.7.2}
\lp P^+_{\vL}, Q^-_{\vL'}\rp = \lp Q^+_{\vL}, P^-_{\vL'}\rp = \d_{\vL,\vL'}. 
\end{equation*}
\par
Here we put
\begin{equation*}
\tag{3.7.3}
\Om^{n,D}(x,y;\Bt) = \sum_{\Bd \in D}\Om^{n,\Bd}(x,y;\Bt).
\end{equation*}
Then (3.2.1) can be rewritten as 
\begin{equation*}
\tag{3.7.4}
\Om^{n,D}(x,y;\Bt) = \sum_{\vL,\vL' \in Z_{n,D}}
           z_{\vL,\vL'}(\Bt) p_{\vL}(x)\ol p_{\vL}(y),
\end{equation*}
where we put $z_{\vL,\vL'}(\Bt) = 0$ if $d(\vL) \ne d(\vL')$. 
Now the orthogonality relations (3.7.2) can be rewritten as follows: 

\begin{lem}  
Under the notation above, 
\begin{align*}
\tag{3.8.1}
\Om^{n,D}(x,y;\Bt) &= \sum_{\vL,\vL'}
      b_{\vL',\vL}(\Bt)P^+_{\vL}(x;\Bt)P^-_{\vL'}(y;\Bt), \\
\tag{3.8.2}
\Om^{n,D}(x,y;\Bt) &= \sum_{\vL}P^+_{\vL}(x;\Bt)Q^-_{\vL}(y;\Bt) 
              = \sum_{\vL}Q^+_{\vL}(x;\Bt)P^-_{\vL}(y;\Bt), 
\end{align*}
where $\vL,\vL'$ run over all elements in $Z_{n,D}$. We put $b_{\vL,\vL'}(t) = 0$ 
unless $\vL \sim \vL'$.
\end{lem}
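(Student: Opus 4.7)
My plan is to obtain both formulas from a single underlying fact: $\Om^{n,D}(x,y;\Bt)$ is the ``reproducing kernel'' for the bilinear form $\lp\ ,\ \rp$ on $\Xi^{n,D}_{\BQ}(\Bt)$, and hence admits a Cauchy-type expansion in any pair of dual bases.

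First I would record the starting point. Summing Proposition 3.3 over $\Bd \in D$ (using (3.7.3)) gives
\begin{equation*}
\Om^{n,D}(x,y;\Bt) = \sum_{\vL \in Z_{n,D}} q^+_{\vL}(x;\Bt)\, m_{\vL}(y)
                   = \sum_{\vL \in Z_{n,D}} m_{\vL}(x)\, q^-_{\vL}(y;\Bt).
\end{equation*}
Together with the duality relations (3.4.1), the identity $\Om^{n,D} = \sum_{\vL} q^+_{\vL}(x;\Bt)\, m_{\vL}(y)$ is the ``Cauchy expansion'' of $\Om^{n,D}$ with respect to the mutually dual bases $\{q^+_{\vL}\}$ and $\{m_{\vL}\}$.

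Next I would prove the standard basis-change lemma: if $\{u_{\vL}\}$ and $\{v_{\vL}\}$ are any two $\BQ(\Bt)$-bases of $\Xi^{n,D}_{\BQ}(\Bt)$ satisfying $\lp u_{\vL}, v_{\vL'}\rp = \d_{\vL,\vL'}$, then $\Om^{n,D}(x,y;\Bt) = \sum_{\vL} u_{\vL}(x)\, v_{\vL}(y)$. This is an elementary linear algebra computation: writing $u_{\vL} = \sum a_{\vL,\vL'} q^+_{\vL'}$ and $v_{\vL} = \sum c_{\vL,\vL''} m_{\vL''}$, the duality condition translates to $A C^{T} = I$ in matrix form, and then $\sum_{\vL} u_{\vL}(x)\, v_{\vL}(y) = \sum_{\vL',\vL''} (A^T C)_{\vL',\vL''} q^+_{\vL'}(x) m_{\vL''}(y) = \sum_{\vL'} q^+_{\vL'}(x) m_{\vL'}(y) = \Om^{n,D}(x,y;\Bt)$. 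By (3.7.2), the two pairs $\{P^+_{\vL}\}, \{Q^-_{\vL}\}$ and $\{Q^+_{\vL}\}, \{P^-_{\vL}\}$ are dual, so applying the lemma to each yields the two identities of (3.8.2).

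Finally, (3.8.1) follows by substitution: expanding $Q^-_{\vL}(y;\Bt) = \sum_{\vL' \sim \vL} b_{\vL',\vL}(\Bt)\, P^-_{\vL'}(y;\Bt)$ from (3.7.1) into the first equality of (3.8.2) gives
\begin{equation*}
\Om^{n,D}(x,y;\Bt) = \sum_{\vL} P^+_{\vL}(x;\Bt) \sum_{\vL' \sim \vL} b_{\vL',\vL}(\Bt)\, P^-_{\vL'}(y;\Bt) = \sum_{\vL,\vL'} b_{\vL',\vL}(\Bt)\, P^+_{\vL}(x;\Bt) P^-_{\vL'}(y;\Bt),
\end{equation*}
using the convention $b_{\vL,\vL'}(\Bt) = 0$ unless $\vL \sim \vL'$.

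There is no real obstacle here; the entire proof is formal once one has (3.7.2) and the Cauchy identity from Proposition 3.3. The only point requiring minor care is the basis-change lemma, which is the analogue of [Mac, Ch.\,I, (4.6)] and holds verbatim since $\lp\ ,\ \rp$ is non-degenerate on $\Xi^{n,D}_{\BQ}(\Bt)$ and the relevant bases are indexed by the same finite set $Z_{n,D}$.
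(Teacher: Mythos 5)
Your proof is correct and follows essentially the same route as the paper: the paper likewise deduces (3.8.2) from the duality (3.7.2) together with the Cauchy identity of Proposition 3.3 "by a similar argument as in [Mac, Chap.\ I, 4]" (which is exactly the basis-change computation you spell out), and then obtains (3.8.1) by substituting (3.7.1). Your writeup merely makes the Macdonald-style linear algebra explicit.
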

\begin{proof}
Since $P^+_{\vL}$ and $Q^-_{\vL'}$, $Q^+_{\vL}$ and $P^-_{\vL'}$, are dual basis 
with respect to the form $\lp\ ,\ \rp$, (3.8.2) is obtained by a similar argument 
as in [Mac, Chap. I,4].  Then by using (3.7.1), we obtain (3.8.1).  
\end{proof}

The following result gives a characterization of Hall-Littlewood functions 
$P^{\pm}_{\vL}$ and $Q^{\pm}_{\vL}$. 

\begin{thm}  
$P^{\pm}_{\vL}(x;\Bt)$ is determined uniquely as the function satisfying the 
following properties. Let $\SC$ be the similarity class containing $\vL$. 
\begin{enumerate}
\item
$P^{\pm}_{\vL}(x;\Bt)$ can be written as 
\begin{equation*}
\tag{3.9.1}
P^{\pm}_{\vL}(x;\Bt) = \sum_{(*)}c_{\vL,\vL'}(\Bt)q^{\pm}_{\vL'}(x;\Bt),
\end{equation*} 
where {\rm (*)} is that $\vL'$ runs over all elements in $Z_{n,D}$ 
such that $\vL' \sim \vL$ or 
$\vL' \gve \vL$. Moreover $c_{\vL,\vL'}(\Bt) \in \BQ(\Bt)$, and 
$c(\Bt)_{\SC} = (c_{\vL',\vL''}(\Bt))_{\vL',\vL'' \in \SC}$
is a non-singular matrix. 
\item
$P^{\pm}_{\vL}(x;\Bt)$ can be written as 
\begin{equation*}
\tag{3.9.2}
P^{\pm}_{\vL}(x;\Bt) = \sum_{(**)}u_{\vL,\vL'}(\Bt)s_{\vL'}(x),
\end{equation*}
where {\rm (**)}  is that $\vL'$ runs over all elements in $Z_{n,D}$ such that 
$\vL' \sim \vL$ or $\vL' \lve \vL$.  Moreover $u_{\vL,\vL'}(\Bt) \in \BQ(\Bt)$, 
and $u(\Bt)_{\SC} = (u_{\vL',\vL''}(\Bt))_{\vL',\vL'' \in \SC}$ is the identity matrix.  
\end{enumerate}
Similarly, $Q^{\pm}_{\vL}(x;\Bt)$ is determined uniquely by similar formulas 
as in  (3.9.1) and (3.9.2), where 
the conditions for $c(\Bt)_{\SC}$ and $u(\Bt)_{\SC}$ are interchanged so that 
$c(\Bt)_{\SC}$ is the identity matrix, and  
$u(\Bt)_{\SC}$ is a non-singular matrix.   
\end{thm}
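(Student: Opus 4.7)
The plan is (A) to show that the functions $P^{\pm}_{\vL}(x;\Bt)$ constructed in Proposition 3.6 satisfy both (i) and (ii), and (B) to deduce uniqueness by linear algebra. The argument for $Q^{\pm}_{\vL}$ is parallel via (3.7.1), with the two normalizations on $\SC$ interchanged, so I concentrate on $P^+_{\vL}$ below.

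Property (ii) is immediate from Proposition 3.6(i): since $u^+_{\vL,\vL'}$ vanishes for $\vL' \sim \vL$ with $\vL' \ne \vL$, the $\SC$-block is the identity. For (i), I expand $P^+_{\vL} = \sum c_{\vL,\vL'}(\Bt) q^+_{\vL'}$ and use the duality (3.4.1) to identify $c_{\vL,\vL'}(\Bt) = \lp P^+_{\vL}, m_{\vL'}\rp$. The main claim is $\lp P^+_{\vL}, m_{\vL'}\rp = 0$ whenever $\vL' \lv \vL$ and $\vL' \not\sim \vL$. Since $\SC$ is an interval in the total order by 3.5(ii), such a $\vL'$ actually satisfies the strictly stronger $\vL' \lv \min\SC$. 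I then chain two expansions: the (component-wise) classical inverse Kostka formula gives $m_{\vL'} = \sum_{\vL''' \lve \vL'} (K\iv)_{\vL',\vL'''}\, s_{\vL'''}$, and inverting the block-unitriangular expansion from Proposition 3.6(i) writes each $s_{\vL'''}$ as a $\BQ(\Bt)$-combination of $P^-_{\vL''''}$ with $\vL'''' \lve \vL'''$, and with $\vL'''' \sim \vL'''$ only when $\vL'''' = \vL'''$. Composing, $m_{\vL'}$ lies in the span of $\{P^-_{\vL''''} : \vL'''' \lv \min\SC\}$, a set disjoint from $\SC$; the orthogonality $\lp P^+_{\vL}, P^-_{\vL''''}\rp = 0$ from Proposition 3.6(ii) then delivers the required vanishing. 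Non-singularity of $c(\Bt)_{\SC}$ follows from a dimension argument: the support statement just established shows that $\{P^+_{\vL''} : \vL'' \gv \max\SC\}$ and $\{q^+_{\vL''} : \vL'' \gv \max\SC\}$ span the same subspace of $\Xi^{n,D}_{\BQ}(\Bt)$, which forces the $\SC$-block to have trivial left kernel.

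For uniqueness, suppose $\tilde P^+_{\vL}$ also satisfies (i) and (ii) and set $F = \tilde P^+_{\vL} - P^+_{\vL}$. From (ii) and the identity on $\SC$, $F = \sum_{\vL' \in \CT} v_{\vL'}(\Bt) s_{\vL'}$, where $\CT = \{\vL' : \vL' \lv \vL,\ \vL' \not\sim \vL\}$; from (i) and duality, $\lp F, m_{\vL'''}\rp = 0$ for every $\vL''' \in \CT$. This yields the linear system $v \cdot M(\Bt) = 0$ with $M(\Bt)_{\vL',\vL'''} = \lp s_{\vL'}, m_{\vL'''}\rp$. At $\Bt = \bold 0$, the form makes $\{s_{\vL}\}$ orthonormal by (3.4.3), so $M(\bold 0)_{\vL',\vL'''} = (K\iv)_{\vL''',\vL'}$; this restriction of the classical inverse Kostka matrix is unitriangular in any total order refining dominance, so $\det M(\bold 0) = 1 \ne 0$. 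Therefore $\det M(\Bt)$ is a nonzero rational function, $M(\Bt)$ is invertible over $\BQ(\Bt)$, and $v = 0$, giving $F = 0$.

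The most delicate step is the support chain in (i): one must prevent the double expansion of $m_{\vL'}$ from reaching $\SC$. This hinges on the interval property 3.5(ii), which upgrades the hypothesis $\vL' \not\sim \vL$ to the strictly stronger $\vL' \lv \min\SC$, and on the compatibility of the total order with the classical (component-wise) dominance via 3.5(i) and (1.3.1); without these the triangularity of inverse Kostka would not align with the $P^-$-block structure and the chain would break.
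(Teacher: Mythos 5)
Your argument is correct, and it rests on the same pillars as the paper's proof --- Proposition 3.6, the block-unitriangularity of $M(s,m)$ with respect to $\lve$ (obtained from [S2, Lemma 3.9] together with 3.5(i)), and the duality $\lp q^{+}_{\vL}, m_{\vL'}\rp = \d_{\vL,\vL'}$ --- but it packages them differently. For (i), the paper compares the two expansions of $\Om^{n,D}$ in (3.3.1) and (3.8.1) to conclude that ${}^tB_-A_+$ is block diagonal, hence that $A_{\pm}=M(q^{\pm},P^{\pm})$ is block upper triangular with the diagonal block over $\SC$ identified explicitly as $B_{\SC}$ (resp.\ ${}^tB_{\SC}$); you instead read off $c_{\vL,\vL'}=\lp P^+_{\vL},m_{\vL'}\rp$ and kill it for $\vL'\lv\min\SC$ by pushing $m_{\vL'}$ into the span of $\{P^-_{\vL''''}:\vL''''\lv\min\SC\}$ and invoking Proposition 3.6(ii). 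That is a legitimate dual reading of the same computation, and your span/dimension count for the non-singularity of $c(\Bt)_{\SC}$ is sound. Your uniqueness argument is genuinely different: the paper writes a candidate $R$ in the $P^{\pm}$-basis in two ways, once with downward support from (ii) and once with upward support from (i), and intersects the supports, with no specialization needed; your route through the invertibility of $\bigl(\lp s_{\vL'},m_{\vL'''}\rp\bigr)_{\CT\times\CT}$ also works, but it silently uses that these entries lie in $\SA$ so that evaluation at $\Bt=\bold 0$ is legitimate --- this is exactly the specialization statement at the end of 3.4, and you should cite it. The one place where your sketch is genuinely thinner than the theorem requires is the $Q^{\pm}$ part: the theorem asserts that the $\SC$-block of the $q^{\pm}$-expansion of $Q^{\pm}_{\vL}$ is the \emph{identity}, not merely non-singular, and this does not follow from "interchanging the normalizations" together with your non-singularity statement alone; the paper gets it by identifying $c(\Bt)_{\SC}$ for $P^{\pm}$ with $B_{\SC}^{-1}$ (resp.\ ${}^tB_{\SC}^{-1}$) and multiplying through by the matrix in (3.7.1). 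In your framework the fix is immediate and in the same spirit: compute $\lp Q^+_{\vL},m_{\vL'}\rp$ for $\vL,\vL'\in\SC$ by expanding $m_{\vL'}$ in the $P^-$-basis (block lower unitriangular) and using the duality $\lp Q^+_{\vL},P^-_{\vL''}\rp$ from (3.7.2) rather than the orthogonality of Proposition 3.6(ii), which yields $\d_{\vL,\vL'}$ directly; but as written this step is missing.
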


\begin{proof}
For two bases $u = \{ u_{\vL}\}, v = \{ v_{\vL} \}$ of $\Xi^{n,D}_{\BQ}(\Bt)$, we denote by
$M = M(u,v)$ the transition matrix $(m_{\vL,\vL'})$ of those two bases, namely 
$u_{\vL} = \sum_{\vL'}m_{\vL,\vL'}v_{\vL'}$. We consider the following 
bases of $\Xi^{n,D}_{\BQ}(\Bt)$, $P^{\pm} = \{ P^{\pm}_{\vL}\}$, $q^{\pm} = \{ q^{\pm}_{\vL}\}$, 
$s = \{ s_{\vL}\}$, and $m = \{ m_{\vL}\}$. 
Put
\begin{equation*}
A_{\pm} = M(q^{\pm}, P^{\pm}), \qquad B_{\pm} = M(m, P^{\pm}). 
\end{equation*}
In the following discussion, we consider the matrices indexed by $Z_{n,D}$ as 
the block matrices with respect to the partition of $Z_{n,D}$ arising from similarity classes. 
We show that $A_{\pm}$ is upper triangular as a block matrix. 
By Proposition 3.6, $M(P^{\pm}, s)$ is a lower triangular block matrix, and the diagonal 
block is the identity matrix. 
On the other hand, we note that $M(s,m)$ is a lower uni-triangular block matrix. In fact, 
for $\Bla, \Bmu \in \SP_{n',r}$, $s_{\Bla}$ is written as a linear combination of 
$m_{\Bmu}$.  Moreover, if we write $\Bla = (\la^{(1)}, \dots, \la^{(r)})$, 
$\Bmu = (\mu^{(1)}, \dots, \mu^{(r)})$, then $|\la^{(k)}| = |\mu^{(k)}|$ and 
$\mu^{(k)} \le \la^{(k)}$ for any $k$, where the coefficient of $m_{\Bla}$ equals 1.
By applying $\vT_{\Bd}$, we see that $s_{\vL}$ is a linear combination of 
$m_{\vL'}$, where if we write $\vL = (A^{(1)}, \dots, A^{(r)})$, 
$\vL' = (B^{(1)}, \dots, B^{(r)})$, then $A^{(k)}, B^{(k)}$ are partitions such that
$|A^{(k)}| = |B^{(k)}|$ and that $B^{(k)} < A^{(k)}$ unless $B^{(k)} = A^{(k)}$ 
for each $k$.  It follows,
by using [S2, Lemma 3.9], that $a(\vL') \ge a(\vL)$ and the equality holds only when 
$\vL' \sim \vL$. Hence, by our assumption 3.5, we have $\vL \lve \vL'$ with equality only when 
$\vL' \sim \vL$. This implies that $M(s,m)$ is lower uni-triangular. 
\par  
From the above discussion, we see that $B\iv_{\pm} = M(P^{\pm}, s)M(s,m)$ is lower 
uni-triangular block matrix, and so is $B_{\pm}$. 
Here put $D_{\pm} = {}^tB_{\mp}A_{\pm}$.   
If we write $A_+ = (A^+_{\vL,\vL'})$, $B_- = (B^-_{\vL,\vL'})$, we have
\begin{align*}
\tag{3.9.3}
\sum_{\vL \in Z_{n,D}}q^+_{\vL}(x;\Bt)m_{\vL}(y) 
         &= \sum_{\vL,\vL',\vL''}A^+_{\vL,\vL'}B^-_{\vL,\vL''}
                P^+_{\vL'}(x;\Bt)P^-_{\vL''}(y;\Bt) \\
         &= \sum_{\vL',\vL''}b_{\vL'',\vL'}P^+_{\vL'}(x;\Bt)P^-_{\vL''}(y;\Bt).
\end{align*}
The second equality is obtained by comparing (3.3.1) and (3.8.1). 
We compare the first and the second equalities in (3.9.3). 
Since $P^+_{\vL'}(x;\Bt)P^-_{\vL''}(y;\Bt)$ are linearly independent, 
this implies that $D_+ = {}^tB_-A_+$ is a diagonal block matrix, and that 
the diagonal block $D^+_{\SC}$ corresponding to the similarity class $\SC$ 
coincides with $B_{\SC} = (b_{\vL',\vL''})$. 
Hence $A_+$ is an upper triangular block matrix, and the diagonal block 
corresponding to $\SC$ coincides with $B_{\SC}$. 
By applying a similar argument to the second identity in (3.3.1), 
we obtain that $A_-$ is an upper triangular block matrix, and the diagonal
block corresponding to $\SC$ coincides with ${}^tB_{\SC}$. 
This shows that $P^{\pm}_{\vL}(x;\Bt)$ satisfies the relation in (i). 
The fact that $P^{\pm}_{\vL}$ satisfies the relation (ii) was already verified 
in Proposition 3.6.
\par
We show the uniqueness of $P^{\pm}_{\vL}$. 
Assume that $R(x;\Bt)$ satisfies the condition (i) and (ii) as above, 
replacing $P^{\pm}_{\vL}$ by $R$. Since $P^{\pm}_{\vL}$ satisfies (i) and (ii), 
one can write as
\begin{align*}
s_{\vL'} &= \sum_{(*)}u'_{\vL',\vL''}P^+_{\vL''}, \\
q^+_{\vL'} &= \sum_{(**)}c'_{\vL',\vL''}P^+_{\vL''},
\end{align*}
where (*) and (**) are similar conditions as (*), (**) in (3.9.1) and (3.9.2), 
by replacing $\vL$ by $\vL'$. 
By substituting these equations into the expression of $R$, we have
\begin{align*}
R = \sum_{(1)}u''_{\vL,\vL''}P^+_{\vL''} 
  = \sum_{(2)}c''_{\vL,\vL''}P^+_{\vL''}, 
\end{align*}
where in (1) $\vL''$ runs over all the elements such that 
$\vL'' \sim \vL$ or $\vL'' \lv \vL$, and in (2) $\vL''$ runs over 
all the elements such that $\vL'' \sim \vL$ or $\vL'' \gve \vL$. 
Moreover, $u''(\Bt)_{\SC} = (u''_{\vL',\vL''}(\Bt))$ is the identity matrix. 
It follows that $R = P^+_{\vL}$. The proof for $P^-_{\vL}$ is similar. 
Thus the uniqueness of $P^{\pm}_{\vL}$ is proved. 
\par
Let $c^{\pm}_{\vL,\vL'}(\Bt)$ be the coefficient $c_{\vL,\vL'}(\Bt)$ for 
$P^{\pm}_{\vL}(x;\Bt)$ in (3.9.1).  It follows from the proof of (i) in the theorem, 
that $(c^+(\Bt))_{\SC} = B\iv_{\SC}$, $(c^-(\Bt))_{\SC} = {}^tB\iv_{\SC}$. 
Hence we obtain the formulas for $Q^+_{\vL}$ if we multiply $B_{\SC}$ on the both sides 
of (3.9.1) and (3.9.2) with respect to $P^+_{\vL}$.
Similarly, we obtain the formulas for $Q^-_{\vL}$ if we multiply ${}^tB_{\SC}$ 
on the both sides of (3.9.1) and (3.9.2) with respect to $P^-_{\vL}$. 
Thus we obtain the assertion for $Q^{\pm}_{\vL}$.  The theorem is proved.      
\end{proof}

\para{3.10.}
Since $\{ P^{\pm}_{\vL} \mid \vL \in Z_{n,D}\}$ and 
$\{ s_{\vL} \mid \vL \in Z_{n,D}\}$ give bases of $\Xi^{n,D}_{\BQ}(\Bt)$, 
there exist unique functions $K^{\pm}_{\vL,\vL'}(\Bt) \in \BQ(\Bt)$ satisfying the
formula;
\begin{equation*}
\tag{3.10.1}
s_{\vL}(x) = \sum_{\vL' \in Z_{n,D}}K^{\pm}_{\vL,\vL'}(\Bt)P^{\pm}_{\vL'}(x;\Bt).
\end{equation*} 
$K^{\pm}_{\vL,\vL'}(\Bt)$ is called the Kostka function associated to complex reflection 
groups, or associated to $r$-symbols, in short. 
It is known that the original Hall-Littlewood functions $P_{\la}$ associated to partitions 
$\la$ have similar properties as in Theorem 3.9.  
Hence by the uniqueness, our Kostka function coincides with Kostka polynomial in the 
case where $r = 1, e = 0, \Bs = \bold 0$. 
In the case where $r = 2$, we have $K^+_{\vL,\vL'}(\Bt) = K^-_{\vL,\vL'}(\Bt)$ 
since $q^+_{\vL} = q^-_{\vL}$ and so $P^{+}_{\vL} = P^-_{\vL}$. In this case, 
we denote $K^{\pm}_{\vL,\vL'}(\Bt)$ as $K_{\vL,\vL'}(\Bt)$. 
\par
The following properties are easily deduced from Theorem 3.9 (ii).
\par\medskip\noindent
(3.10.2) \ $K^{\pm}_{\vL,\vL}(\Bt) = 1$, $K^{\pm}_{\vL,\vL'}(\Bt) = 0$ for 
$\vL' \sim \vL,\vL' \ne \vL$.  Moreover, $K^{\pm}_{\vL,\vL'}(\Bt) = 0$ if $\vL' \gv \vL$.
\par\medskip
In the one-parameter case, we define a modified Kostka function 
$\wt K^{\pm}_{\vL,\vL'}(t) \in \BQ(t)$ by

\begin{equation*}
\tag{3.10.3}
\wt K^{\pm}_{\vL,\vL'}(t) = t^{a(\vL')}K^{\pm}_{\vL,\vL'}(t\iv).
\end{equation*}

\remarks{3.11.} \
(i)  Our definition of Hall-Littlewood functions and Kostka functions 
depend on the choice of a total order $\lve$ on $Z_{n,D}$. 
It is not known, in general, whether or not 
those functions actually depend on the total order.
Also our Kostka function $K^{\pm}_{\vL, \vL'}(\Bt)$ are rational functions on $\Bt$.
It is not known in general whether $K^{\pm}_{\vL,\vL'}(\Bt) \in \BZ[\Bt]$ or not.
\par
(ii) In [S4], the notion of limit symbols was introduced, which is defined 
by a special choice of $e$ and $\Bs$ so that each similarity class consists of
a single element. In that case (for the multi-parameter, with $D = \{ d^{\bullet}\}$), 
it is proved in [S6] that $P^{\pm}_{\vL}(x;\Bt) \in \Xi^{n,\Bd^{\bullet}}[\Bt]$
and that $K^{\pm}_{\vL, \vL'}(\Bt) \in \BZ[\Bt]$. Moreover, those functions do not
depend on the choice of the total order whenever it is compatible with 
the dominance order on $\SP_{n,r}$.  
\par
(iii) In the limit symbol case, it is proved in [S6] that 
our Kostka function $K^-_{\vL,\vL'}(\Bt)$ 
coincides with the function introduced by Finkelberg-Ionov [FI].  By using their 
results, it is shown that $K^-_{\vL, \vL'}(\Bt)$ has a geometric realization 
in terms of coherent sheaves, which implies that $K^-_{\vL, \vL'}(\Bt)$ is 
a polynomial in $t_1, \dots, t_r$ with non-negative coefficients. 
\par\bigskip
\section{Algorithm of computing Kostka functions}

\para{4.1.}
In [S2], it was shown that there exists an algorithm of 
computing Kostka functions associated to $Z_{n, D}$ with 
$D = \{ \Bd^{\bullet}\}$. In [S6], 
multi-parameter Kostka functions associated to limit symbols 
were introduced, but the algorithm of computing them was 
not discussed there. 
In this section, we show 
that a similar algorithm as in [S2] holds also for our Kostka functions 
(with multi-parameter) associated to $Z_{n, D}$ for any $D$.  
\par
Let $W_{n,r}$ be the complex reflection group $S_n \ltimes (\BZ/r\BZ)^n$. 
It is known that the irreducible characters of $W_{n,r}$ are parametrized 
by $\SP_{n,r}$. We denote by $\x^{\Bla}$ the irreducible character of 
$W_{n,r}$ corresponding to $\Bla \in \SP_{n,r}$. 
\para{4.2.}
We consider the matrix $X_{\pm}(\Bt) = M(p, P^{\pm})$, where 
$p = \{ p_{\vL} \}$ is the basis of $\Xi^{n,D}_{\BC}(\Bt)$ consisting of 
power sum symmetric functions, and $P^{\pm} = \{ P^{\pm}_{\vL} \}$ 
are bases of $\Xi^{n,D}_{\BQ}(\Bt)$ consisting of Hall-Littlewood functions. 
Thus $X_{\pm}(\Bt) = (X^{\pm}_{\vL,\vL'}(\Bt))$, 
where $X^{\pm}_{\vL,\vL'}(\Bt) \in \BC(\Bt)$ such that 
\begin{equation*}
\tag{4.2.1}
p_{\vL}(x) = \sum_{\vL' \in Z_{n,D}}X^{\pm}_{\vL,\vL'}(\Bt)P^{\pm}_{\vL'}(x;\Bt).
\end{equation*} 
In the classical case, Green polynomials $Q_{\la,\mu}(t)$ are defined by 
$Q_{\la,\mu}(t) = t^{n(\mu)}X_{\la, \mu}(t\iv)$ (see [Mac, III, 7]), 
where $X_{\la,\mu}(t)$ is defined similarly to (4.2.1) with respect to 
$p_{\la}(x)$ and $P_{\mu}(x;t)$ for $\la, \mu \in \SP_n$.  Thus we call 
$X^{\pm}_{\vL,\vL'}(\Bt)$ the generalized Green functions associated to 
$r$-symbols. 
\par
By Proposition 3.6 (iii), we have $P^{\pm}_{\vL'}(x;\bold 0) = s_{\vL'}(x)$. 
By the Frobenius formula (see [Mac]), we know, for $\Bla, \Bmu \in \SP_{n',r}$,  that 
\begin{equation*}
p_{\Bla} = \sum_{\Bmu \in \SP_{n',r}}\x^{\Bmu}(w_{\Bla})s_{\Bmu},  
\end{equation*}
where $w_{\Bla}$ is a representative of the conjugacy class in $W_{n',r}$ 
corresponding to $\Bla \in \SP_{n',r}$. 
(If $w \in W_{n',r}$ is conjugate to $w_{\Bla}$, we say that $w$ has type $\Bla$). 
Hence by the construction of $p_{\vL}$ and $s_{\vL'}$ in terms of the 
isomorphism $\vT_{\Bd}$, we have the following.
\par\medskip\noindent
(4.2.2) \ Assume that $\vL, \vL'  \in Z_{n,D}$. 
If $d(\vL) \ne d(\vL')$, then $X^{\pm}_{\vL,\vL'}(\bold 0) = 0$. 
If $d(\vL) = d(\vL') = \Bd$ and 
$\vL = \vL(\Bla), \vL' = \vL(\Bmu)$ with $\Bla, \Bmu \in \SP_{n',r}$ 
(under the isomorphism 
$\vT_{\Bd} : \wh\Xi^{n',\bold 0}_{\BQ}(\Bt) \simeq \Xi^{n,\Bd}_{\BQ}(\Bt)$),
then 
\begin{equation*}
X^{\pm}_{\vL,\vL'}(\bold 0) = \x^{\Bmu}(w_{\Bla}). 
\end{equation*} 
\par
In particular, $X_{\pm}(\bold 0)$ (after a suitable permutation of indices) 
turns out to be a diagonal block matrix with respect to the 
partition $Z_{n,D} = \coprod_{\Bd}Z_{n,\Bd}$, and the diagonal block corresponding 
to $Z_{n,\Bd}$ is exactly the character table of $W_{n',r}$. 
Since $X_{\pm}(\bold 0)$ does not depend on the signature, we denote it by 
$X(\bold 0)$. 
\par
On the other hand, by (3.7.4) and (3.8.1), we have
\begin{equation*}
\tag{4.2.3}
\sum_{\vL,\vL' \in Z_{n,D}}z_{\vL,\vL'}(\Bt) p_{\vL}(x)\ol p_{\vL}(y) = 
        \sum_{\vL, \vL' \in Z_{n,D}}b_{\vL',\vL}(\Bt)P^+_{\vL}(x;\Bt)P^-_{\vL'}(y;\Bt).
\end{equation*} 
Put $D(\Bt) = (b_{\vL',\vL}(\Bt))$.  Then $D(\Bt)$ is a diagonal block matrix 
with respect to the similarity classes in $Z_{n,D}$. 
Put $Z(\Bt) = (z_{\vL,\vL'}(\Bt))$. $Z(\Bt)$ is a non-singular matrix since 
$Z(\Bt_0)$ is non-singular by Remark 2.7.  It followed from the definition that
\par\medskip\noindent
(4.2.4) \  $Z(\Bt)$ is a block diagonal matrix with respect to 
the partition $Z_{n,D} = \coprod_{\Bd}Z_{n,\Bd}$.
\par\medskip  
By substituting the formula for $p_{\vL}(x)$ in (4.2.1) into 
the left hand side of (4.2.3), and by comparing the coefficients of 
$P^+_{\vL}(x;\Bt)P^-_{\vL'}(y;\Bt)$, we obtain the following equation of matrices.
\begin{equation*}
\tag{4.2.5}
{}^t X_+(\Bt)Z(\Bt) \ol X_-(\Bt) = D(\Bt),
\end{equation*}   
where $\ol X_-(\Bt)$ denotes the complex conjugate of the matrix $X_-(\Bt)$. 
(Note that $P^{\pm}_{\vL}(x;\Bt) \in \Xi^{n,D}_{\BQ}(\Bt)$.)
If we put $\vL(\Bt) = D(\Bt)\iv$, (4.2.5) can be rewritten as 
\begin{equation*}
\tag{4.2.6}
\ol X_-(\Bt)\vL(\Bt)\,{}^t\!X_{+}(\Bt) = Z(\Bt)\iv.
\end{equation*}
Let $K_{\pm}(\Bt) = M(s, P^{\pm})$ be the transition matrix between Schur functions and 
Hall-Littlewood functions, namely, 
\begin{equation*}
s_{\vL}(x) = \sum_{\vL' \in Z_{n,D}}K^{\pm}_{\vL,\vL'}(\Bt)P^{\pm}_{\vL'}(x;\Bt).
\end{equation*}
By Proposition 3.6, $K_{\pm}(\Bt)$ is a lower triangular block matrix, and 
all the diagonal blocks are identity matrices.  
Moreover, $K^{\pm}_{\vL,\vL'}(\Bt) \in \BQ(\Bt)$, and $K_{\pm}(\bold 0)$ is the identity 
matrix. 
\par
Since $M(s, P^{\pm}) = M(p,s)\iv M(p, P^{\pm})$, we have
$K_{\pm}(\Bt) = X(\bold 0)\iv X_{\pm}(\Bt)$. 
Substituting this formula into (4.2.6), we obtain 
\begin{equation*}
\tag{4.2.7}
K_-(\Bt)\vL(\Bt)\,{}^t\! K_+(\Bt) 
     = \ol X(\bold 0)\iv Z(\Bt)\iv \,{}^t\! X(\bold 0)\iv.
\end{equation*} 

Put $z_{\vL} = z_{\Bla}$ for $\vL = \vL(\Bla)$, and let $H$ be the diagonal matrix
whose $\vL\vL$-entry is $z_{\vL}\iv$. Note that $z_{\Bla} = |Z_{W'}(w_{\Bla})|$ 
for $W' = W_{n',r}$.  
Since $X(\bold 0)$ is the character table of $W_{n',r}$ on each $Z_{n,\Bd}$, 
by the orthogonality relations of irreducible characters of $W_{n',r}$, 
we have ${}^tX(\bold 0)H\ol X(\bold 0) = I$. Substituting this into (4.2.7), we have 
\begin{equation*}
\tag{4.2.8}
K_-(\Bt)\vL(\Bt)\,{}^t\! K_+(\Bt)  = {}^tX(\bold 0)HZ(\Bt)\iv H\ol X(\bold 0).
\end{equation*}  

\para{4.3.}
Put $Z(\Bt)\iv = (z'_{\vL,\vL'}(\Bt))$. Then $z'_{\vL,\vL'}(\Bt) = 0$ 
unless $d(\vL) = d(\vL')$. For $w,w' \in W_{n',r}$, we define 
$z'_{w,w'}(\Bt)$ as follows; assume that $w$ is of type $\Bla$ and 
$w'$ is of type $\Bmu$ for $\Bla,\Bmu \in \SP_{n',r}$, and  
then put $z'_{w,w'}(\Bt) = z'_{\vL,\vL'}(\Bt)$ for 
$\vL = \vL(\Bla), \vL' = \vL(\Bmu)$.
\par
Put $W' = W_{n',r}$.  
For $\Bla, \Bmu \in \SP_{n',r}$, we define $\w_{\Bla, \Bmu}(\Bt)$ by 
\begin{equation*}
\tag{4.3.1}
\w_{\Bla,\Bmu}(\Bt) = \frac{1}{|W '|^2}
                       \sum_{w, w' \in W '}\x^{\Bla}(w)\ol{\x^{\Bmu}(w')}
                         z'_{w,w'}(\Bt). 
\end{equation*} 
We define a matrix $\BOm = (\w_{\vL, \vL'}(\Bt))$ by 
\begin{equation*}
\tag{4.3.2}
\w_{\vL,\vL'}(\Bt) = \begin{cases}
                      \w_{\Bla,\Bmu}(\Bt) &\quad\text{ if } 
          \vL = \vL(\Bla), \vL' = \vL(\Bmu) \text{ with } \Bla,\Bmu \in \SP_{n',r}, \\
                      0  &\quad\text{ if } d(\vL) \ne d(\vL').
                     \end{cases}
\end{equation*}

Note that the matrix $Z(\Bt)\iv$ is computable by (2.6.3) and 
(2.9.1).  Hence the matrix $\BOm$ is computable if we know the 
character table of $W_{n',r}$ for various $n'$. 
\par
The following result gives an algorithm of computing $K_{\pm}(\Bt)$.
In the following, we consider the block matrices with respect to 
the similarity classes in $Z_{n,D}$. 

\begin{thm}   
\begin{enumerate}
\item 
The matrix $K_{\pm}(\Bt)$ is determined as the unique solution of the 
following equation satisfying the properties {\rm (a)} and {\rm (b)}.
\begin{equation*}
\tag{4.4.1}
\BP_-\BLa\, {}^t\BP_+ = \BOm.
\end{equation*}
\begin{enumerate}
\item 
$\BP_{\pm}$ is a lower triangular block matrix, where the diagonal block is
the identity matrix. 
\item
$\BLa$ is a diagonal block matrix.
\end{enumerate}
Then $K_{\pm}(\Bt) = \BP_{\pm}$, and there exists an algorithm of 
computing $K_{\pm}(\Bt)$.
\item 
If $\vL, \vL' \in Z_{n,D}$ with $d(\vL) \ne d(\vL')$, 
then $(\BP_{\pm})_{\vL,\vL'} = 0$ and 
$\BLa_{\vL,\vL'} = 0$. 
\end{enumerate}
\end{thm}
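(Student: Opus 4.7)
The plan is to reinterpret (4.4.1) as a rewriting of (4.2.8), exhibit $(K_\pm(\Bt),\vL(\Bt))$ as a solution, and then prove uniqueness via a recursion on similarity classes that doubles as the algorithm.

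First I verify that $\BOm$ of (4.3.2) equals the right-hand side ${}^tX(\bold 0)HZ(\Bt)\iv H\ol X(\bold 0)$ of (4.2.8). Writing the $(\vL,\vL')$-entry for $\vL=\vL(\Bla), \vL'=\vL(\Bmu)$ and using (4.2.2) together with $H_{\vL,\vL}=z_{\Bla}\iv$, the double sum over conjugacy classes converts into a double sum over $W'$ via $z_{\Bla}\iv=|Z_{W'}(w_\Bla)|\iv$, reproducing (4.3.1); vanishing off defect-diagonals follows from (4.2.4). Consequently (4.2.8) reads $K_-(\Bt)\,\vL(\Bt)\,{}^tK_+(\Bt) = \BOm$, so $(\BP_\pm,\BLa):=(K_\pm(\Bt),\vL(\Bt))$ is a solution of (4.4.1). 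Condition (a) is Proposition 3.6(i), and (b) holds because $D(\Bt)=(b_{\vL',\vL}(\Bt))$ is diagonal block on similarity classes (by definition of the Hall--Littlewood pair orthogonality), so is its inverse $\BLa$.

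For uniqueness and the algorithm, enumerate the similarity classes $\SC_0\lv \SC_1\lv\cdots$ and induct on $i$ to force the $\SC_i$-rows of $\BP_\pm$ and the block $\BLa_{\SC_i}$. The $(\SC_0,\SC_0)$-block of $\BP_-\BLa\,{}^t\BP_+$ is simply $\BLa_{\SC_0}$, so $\BLa_{\SC_0}=\BOm_{\SC_0,\SC_0}$. For the step, for $\vL\in\SC_i, \vL'\in\SC_k$ with $k<i$,
\begin{equation*}
\BOm_{\vL,\vL'} \;=\; \sum_{\vL''\in\SC_k}(\BP_-)_{\vL,\vL''}\BLa_{\vL'',\vL'} \;+\; R_{\vL,\vL'},
\end{equation*}
with $R_{\vL,\vL'}$ collecting contributions from $\SC_j$, $j<k$, already known by induction. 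Since $\BLa_{\SC_k}$ is known and $(\det\BLa_{\SC_k})|_{\Bt=\bold 0}\ne 0$ (by the estimate on $B_\SC$ in the proof of Proposition 3.6), one solves uniquely for $(\BP_-)_{\vL,\vL''}$ on $\SC_k$; a symmetric argument with $\vL\in\SC_k,\vL'\in\SC_i$ determines $(\BP_+)_{\vL,\vL''}$ on $\SC_k$. Finally the $(\SC_i,\SC_i)$-block gives $\BLa_{\SC_i}=\BOm_{\SC_i,\SC_i}-(\text{known})$. This recursion is the algorithm and delivers both existence and uniqueness in (i).

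For (ii) I strengthen the induction of Proposition 3.6 to show $P^\pm_\vL(x;\Bt)\in\Xi^{n,d(\vL)}_{\SA}$. Granting this, expanding $s_\vL\in\Xi^{n,d(\vL)}$ in the basis $\{P^\pm_{\vL'}\}$ via the decomposition $\Xi^{n,D}_\BQ(\Bt)=\bigoplus_\Bd\Xi^{n,\Bd}_\BQ(\Bt)$ forces $K^\pm_{\vL,\vL'}(\Bt)=0$, i.e.\ $(\BP_\pm)_{\vL,\vL'}=0$, whenever $d(\vL)\ne d(\vL')$; and $D(\Bt)_{\vL,\vL'}=\lp P^+_\vL,P^-_{\vL'}\rp=0$ by orthogonality of $\Xi^{n,\Bd}$ and $\Xi^{n,\Bd'}$, so $\BLa=D(\Bt)\iv$ is also defect-block-diagonal. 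For the inductive step, revisit (3.6.4): if $\vL''\in\SC$ has $d(\vL'')\ne d(\vL)$, then $\lp s_\vL,P^-_{\vL''}\rp=0$ by orthogonality, and the matrix $(\lp P^+_{\vL'},P^-_{\vL''}\rp)_{\vL',\vL''\in\SC}$ splits into defect sub-blocks by the inductive hypothesis; each sub-block specializes to the identity at $\Bt=\bold 0$, so the sub-block for any $\Bd''\ne d(\vL)$ is invertible, forcing $d^+_{\vL,\vL'}=0$ for all $\vL'\in\SC$ with $d(\vL')\ne d(\vL)$. The argument for $P^-_\vL$ is symmetric.

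The principal technical obstacle is the non-singularity $(\det\BLa_\SC)|_{\Bt=\bold 0}\ne 0$, needed both within each similarity class for the recursion of (i) and within each defect sub-block for the defect-splitting in (ii); all instances reduce to the estimate on $B_\SC$ already secured in the proof of Proposition 3.6.
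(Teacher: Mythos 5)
Your proposal is correct, but part (ii) takes a genuinely different route from the paper. For part (i) you and the paper do essentially the same thing: identify the right-hand side of (4.2.8) with $\BOm$, observe that $(K_{\pm}(\Bt),\vL(\Bt))$ solves (4.4.1) with properties (a),(b), and then invoke the block-triangular recursion for uniqueness and the algorithm --- the paper merely cites this as ``a standard argument from linear algebra'' where you spell it out (your inner induction should run over column-classes in \emph{increasing} order so that the remainder $R_{\vL,\vL'}$ is genuinely known, but the structure is right, and the invertibility of each $\BLa_{\SC}$ is guaranteed because the known solution has $\BLa=D(\Bt)\iv$ with $(\det B_{\SC})|_{\Bt=\bold 0}\ne 0$). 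For part (ii), the paper follows Lusztig's argument in [L4, Thm.~24.4]: a purely formal induction on similarity classes using only the matrix identity, the vanishing $\w_{\vL,\vL'}=0$ for $d(\vL)\ne d(\vL')$, and the non-singularity of the diagonal blocks (statements $(A_{\SC})$ and $(B^{\pm}_{\SC})$). You instead strengthen the induction inside the proof of Proposition 3.6 to show $P^{\pm}_{\vL}\in\Xi^{n,d(\vL)}_{\SA}$, using the orthogonality of the defect summands to split the linear system (3.6.4) into defect sub-blocks, each invertible because it specializes to the identity at $\Bt=\bold 0$; the defect-block-diagonality of $K_{\pm}(\Bt)$ and of $D(\Bt)\iv$ then falls out, and uniqueness from (i) transfers it to $\BP_{\pm}$ and $\BLa$. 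Your route yields the stronger intermediate statement that each Hall--Littlewood function lives in a single defect summand, while the paper's route has the advantage of being formal --- it applies verbatim to any solution of an equation of the shape (4.4.1) with $\BOm$ vanishing off the defect diagonal, which is exactly the form needed for the comparison with Lusztig's algorithm in Section 5. Both arguments are sound.
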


\begin{proof}
We consider the matrix equation (4.2.8). 
It is clear that the right hand side of (4.2.8) 
coincides with the matrix $\BOm$ defined in 4.3. On the other hand, 
$\BP_{\pm} = K_{\pm}(\Bt), \vL(\Bt) = \BLa$ satisfies the
condition (a) and (b) in (i).   Thus $K_{\pm}(\Bt)$ and $\vL(\Bt)$ 
satisfies the equation (4.4.1). 
We now consider (4.4.1) as an equation of matrices, where $\BOm$ is known, and 
$\BP_{\pm}, \BLa$ are unknown matrices. By a standard argument from the linear 
algebra, we know that $\BP_{\pm}$ and $\BLa$ are the unique solution of 
(4.4.1) and that there exists an algorithm of computing $\BP_{\pm}$ and $\BLa$. 
Thus (i) holds. 
\par
Next we show (ii). 
The proof below is a natural generalization of the 
argument in the proof of [L4, Thm 24.4].  
We write $(\BP_{\pm})_{\vL,\vL'} = p^{\pm}_{\vL,\vL'}$ and 
$\BLa_{\vL,\vL'} = \la_{\vL, \vL'}$. 
Let $\BLa_{\SC}$ be the block corresponding to a similarity class $\SC$.
We consider the following statement.
\par\medskip\noindent
($A_{\SC}$) \ Any element $\la_{\vL,\vL'}$ in $\BLa_{\SC}$ satisfies (ii). 
\par\medskip\noindent
($B^{\pm}_{\SC}$) \ $p^{\pm}_{\vL, \vL'}$ satisfies (ii) for any $\vL' \in \SC$ 
and for any $\vL \in Z_{n,D}$.  
\par\medskip
We show, by induction, that ($A_{\SC}$) and ($B^{\pm}_{\SC}$) holds for any class $\SC$. 
\par
Take $\vL, \vL'\in Z_{n,D}$, such that $d(\vL) \ne d(\vL')$. 
Then (4.4.1) can be written as 
\begin{equation*}
\tag{4.4.2}
\sum_{\vL \gve \vL_1}\sum_{\vL' \gve \vL_2}
       p^-_{\vL,\vL_1}\la_{\vL_1,\vL_2}p^+_{\vL',\vL_2} = \w_{\vL,\vL'} = 0. 
\end{equation*}
First we show that
\par\medskip\noindent
(4.4.3) \ Assume that ($A_{\SC'}$) holds for $\SC' \lv \SC$, and 
($B^{\pm}_{\SC'}$) holds for $\SC' \lv \SC$. Then ($A_{\SC}$) holds. 
In particular, ($A_{\SC}$) holds if $\SC$ is the minimum class in $Z_{n,D}$.
\par\medskip 
Take $\vL, \vL' \in \SC$.
We note that 
\begin{equation*}
\tag{4.4.4}
\sum_{\substack{\vL \gve \vL_1 \\ \vL' \gve \vL_2}}
               \sum_{\substack{\vL \not\sim \vL_1 \\ \text{ or } \\
                      \vL' \not\sim \vL_2 }}
                 p^-_{\vL,\vL_1}\la_{\vL_1,\vL_2}p^+_{\vL',\vL_2} = 0
\end{equation*} 
under the assumption of (4.4.3).  In fact, 
suppose that the left hand side of (4.4.4) is not zero. 
Then there exists $\vL_1, \vL_2$
such that $p^-_{\vL,\vL_1} \ne 0, \la_{\vL_1, \vL_2} \ne 0, p^+_{\vL',\vL_2} \ne 0$.
If $\vL \not\sim \vL_1$, we have $d(\vL) = d(\vL_1)$ by (4.4.3).  
Since $\la_{\vL_1, \vL_2} \ne 0$, then $\vL_1 \sim \vL_2$.  Thus by (4.4.3), 
we have $d(\vL_1) = d(\vL_2)$ and $d(\vL_2) = d(\vL')$. 
This implies that $d(\vL) = d(\vL')$, which contradicts to our assumption. 
The case where $\vL' \not\sim \vL_2$ is proved similarly. 
Hence (4.4.4) holds.
\par
Substituting the equation (4.4.4) into (4.4.2), by noting that 
the diagonal block of $\BP_{\pm}$ corresponding to $\SC$ 
is the identity matrix, we have
\begin{equation*}
\tag{4.4.5}
\w_{\vL,\vL'} = \la_{\vL,\vL'} = 0.
\end{equation*}
Hence ($A_{\SC}$) holds. 
\par
Next we show that 
\par\medskip\noindent
(4.4.6) \ Assume that ($A_{\SC'}$) holds for any $\SC' \lve \SC$, and
($B^{\pm}_{\SC'}$) holds for any $\SC' \lv \SC$.  Then ($B^{\pm}_{\SC}$) holds.
\par\medskip
Take $\vL \in Z_{n,D}$ and $\vL' \in \SC$.  
Under the assumption of (4.4.6), the following equation can be proved 
in a similar way as in (4.4.4).
\begin{equation*}
\tag{4.4.7}
       \sum_{\vL \gve \vL_1}
         \sum_{\substack{\vL' \gve \vL_2 \\  \vL' \not\sim \vL_2 }}
                       p^-_{\vL,\vL_1}\la_{\vL_1, \vL_2}p^+_{\vL',\vL_2} = 0. 
\end{equation*}
By substituting (4.4.7) into (4.4.2), we have   
\begin{equation*}
\tag{4.4.8}
\sum_{\substack{\vL \gve \vL_1 }}
    \sum_{\substack{\vL' \gve \vL_2 \\ \vL' \sim \vL_2}}
        p^-_{\vL, \vL_1}\la_{\vL_1,\vL_2}p^+_{\vL',\vL_2} = 0. 
\end{equation*}
Since the diagonal block of $\BP_+$ corresponding to $\SC$ is the identity 
matrix, and since ($A_{\SC}$) holds, (4.4.8) can be written as
\begin{equation*}
\tag{4.4.9}
\sum_{\substack{\vL_1 \in \SC \\ d(\vL_1) = d(\vL')}}
                 p^-_{\vL, \vL_1}\la_{\vL_1, \vL'} = 0.
\end{equation*} 
This holds for any $\vL \in Z_{n,D}$ and $\vL' \in \SC$. 
For a fixed $\vL$, we consider (4.4.9) as a system of linear equations 
with unknown variables $p^-_{\vL,\vL_1}$.  Since $\BLa_{\SC}$ is a non-singular matrix, 
the submatrix $(\la_{\vL_1, \vL'})_{d(\vL_1) = d(\vL')}$ is also non-singular
(it gives a block decomposition of $\BLa_{\SC}$ under a suitable permutation of indices). 
It follows that $p^-_{\vL, \vL_1} = 0$ if $d(\vL) \ne d(\vL_1)$, and 
($B^-_{\SC}$) holds. 
If we take $\vL \in \SC$ and $\vL' \in Z_{n,D}$, by a similar argument as above, 
one can show that ($B^+_{\SC}$) holds. Hence (4.4.6) is proved. 
\par
Now ($A_{\SC}$) and ($B^{\pm}_{\SC}$) follows from (4.4.3) and (4.4.6). 
The assertion (ii) is proved. 
\end{proof}

\para{4.5.}
In the one-parameter case, we obtain a more precise result than 
Theorem 4.4. 
Hereafter, we consider the one-parameter case. 
\par
For $W = W_{n,r}$, 
we denote by $P_W(t)$ the Poincar\'e polynomial of $W$. 
Explicitly, it is given as
\begin{equation*}
\tag{4.5.1}
P_W(t) = \prod_{i = 1}^n\frac{t^{ir} - 1}{t-1}.
\end{equation*}
Let $\BV$ be the $n$-dimensional vector space over $\BC$ such that 
$\BV$ is the reflection representation of $W$. 
For any character $f$ of $W$, put 
\begin{equation*}
\tag{4.5.2}
R(f) = (t-1)^nP_W(t)|W |\iv\sum_{w \in W }
            \frac{\det_{\BV}(w)f(w)}{\det_{\BV}(t\cdot\id_{\BV} - w)}.
\end{equation*}
If we denote by $R_W = \bigoplus_iR_i$ the coinvariant algebra of $W$, 
we have $P_W(t) = \sum_{i \ge 0}(\dim R_i)t^i$, and 
$R(f) = \sum_i\lp R_i, f\rp t^i$ (the graded multiplicity of $f$ in $R_W$).   
Hence $R(f) \in \BZ[t]$. 
Let $N^*$ be the number of reflections in $W$. Then $N^*$ is the 
highest degree of $R_W$, and $R_{N^*} \simeq \ol{\det}_{\BV}$ as $W$-modules.  
Explicitly, we have
\begin{equation*}
\tag{4.5.3}
N^* = \sum_{i=1}^n(ri - 1) = rn(n+1)/2 - n.
\end{equation*}

For any $\Bla,\Bmu \in \SP_{n,r}$, we define $\wt\w'_{\Bla,\Bmu}(t) \in \BQ(t)$ by 
\begin{equation*}
\tag{4.5.4}
\wt\w'_{\Bla, \Bmu}(t) = \frac{t^n}{|W |}\sum_{w \in W }
       \frac{\x^{\Bla}(w)\ol\x^{\Bmu}(w)}{\det_{\BV}(t\cdot\id_{\BV} - w)}.
\end{equation*}
We define $\GG_W(t) \in \BZ[t]$ by $\GG_W(t) = (t-1)^nt^{N^*}P_W(t)$.
($\GG_W(t)$ is regarded as a formal analogue of the order of the 
finite classical group $G(\BF_q)$, 
where the role of the Weyl group is replaced by the complex reflection group 
$W_{n,r}$ (see [BMM]).)   
$\wt\w'_{\Bla,\Bmu}(t)$ can be written as 
\begin{equation*}
\wt\w'_{\Bla,\Bmu}(t) = \GG_W(t)\iv t^{N^* + n}
         R(x^{\Bla}\otimes \ol\x^{\Bmu}\otimes\ol{\det}_{\BV}).
\end{equation*} 
\par
For $\vL, \vL' \in Z_{n,D}$, we define $\wt\w_{\vL,\vL'}(t)$ as follows.
Put $W = W_{n,r}$. 
\begin{equation*}
\tag{4.5.5}
\wt\w_{\vL,\vL'}(t) = \begin{cases}
                   \GG_W(t)\wt\w'_{\Bla,\Bmu}(t) &\quad\text{ if }
                       \vL = \vL(\Bla), \vL' = \vL(\Bmu) \text{ with } 
                               \Bla, \Bmu \in \SP_{n',r}, \\
                   0     &\quad\text{ if } d(\vL) \ne d(\vL').
                   \end{cases}
\end{equation*}
Note that in the first case,  
$\w'_{\Bla,\Bmu}(t)$ is defined with respect to $W ' = W_{n',r}$. Since
$\GG_W(t)$ is divisible by $\GG_{W'}(t)$, we see that $\wt\w_{\vL,\vL'}(t) \in \BZ[t]$. 
\par
We define a matrix $\wt\BOm$ by $\wt\BOm = (\wt\w_{\vL,\vL'}(t))$. 
Note that $\wt\BOm$ can be computed explicitly once we know the character 
table of $W_{n',r}$ for various $n' \le n$. 
\par
Let $\wt K^{\pm}_{\vL,\vL'}(t)$ be the modified Kostka functions defined in 
(3.10.3).  Computing $K^{\pm}_{\vL,\vL'}(t)$ is equivalent to computing 
$\wt K^{\pm}_{\vL,\vL'}(t)$. 
We define a matrix $\wt K_{\pm}(t)$ by $\wt K_{\pm}(t) = (\wt K^{\pm}_{\vL,\vL'}(t))$. 
By a similar argument as in the proof of Theorem 4.4, we have the following result, 
which is a generalization of [S2, Thm. 5.4].  

\begin{thm}  
\begin{enumerate}
\item 
The matrix $\wt K_{\pm}(t)$ is determined as the unique solution of the 
following equation satisfying the properties {\rm (a)} and {\rm (b)}.
\begin{equation*}
\tag{4.6.1}
\wt\BP_-\wt\BLa\, {}^t\wt\BP_+ = \wt\BOm.
\end{equation*}
\begin{enumerate}
\item 
$\wt\BP_{\pm}$ is a lower triangular block matrix, where the diagonal block is
the scalar matrix $t^{a(\vL)}I_{\SC}$ for the similarity class $\SC$ containing 
$\vL$.  
\item
$\wt\BLa$ is a diagonal block matrix.
\end{enumerate}
Then $\wt K_{\pm} = \wt\BP_{\pm}$.
\item 
If $\vL, \vL'  \in Z_{n,D}$ with $d(\vL) \ne d(\vL')$, 
then $(\wt\BP_{\pm})_{\vL,\vL'} = 0$ and 
$\wt\BLa_{\vL,\vL'} = 0$. 
\end{enumerate}
\end{thm}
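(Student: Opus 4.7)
My plan is to reduce Theorem 4.6 to the one-parameter specialization of Theorem 4.4 by means of the substitution $t \mapsto t\iv$ combined with a diagonal rescaling by $T = \Diag_{\vL \in Z_{n,D}}(t^{a(\vL)})$. The block structure of $\wt\BP_{\pm}$ and $\wt\BLa$ and the uniqueness in assertion (ii) will then follow from the corresponding properties of $K_{\pm}(\Bt)$, $\vL(\Bt)$ and $\BOm$ already established for Theorem 4.4.

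First I would specialize the matrix equation (4.4.1) at $\Bt = \Bt_0$. By Remark 2.7, $Z(t)$ is block-diagonal on each defect class with diagonal entries $z_{\Bla}(t)\iv$, so $z'_{w,w'}(t)$ in (4.3.1) vanishes unless $w$ and $w'$ are conjugate in $W ' = W_{n',r}$. The double sum in (4.3.1) then collapses to
\begin{equation*}
\w_{\Bla,\Bmu}(t) = \frac{1}{|W '|}\sum_{w \in W '}\Bigl(\prod_{k=1}^{r} z_{\la^{(k)}(w)}(t)\Bigr)\x^{\Bla}(w)\ol{\x^{\Bmu}(w)},
\end{equation*}
where $(\la^{(1)}(w), \dots, \la^{(r)}(w))$ is the type of $w$. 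The key observation is that $\prod_k z_{\la^{(k)}(w)}(t) = \det_{\BV '}(1 - tw)\iv$, where $\BV '$ is the reflection representation of $W '$: a cycle of length $j$ of $w \in S_{n'} \ltimes (\BZ/r\BZ)^{n'}$ whose total twist is congruent to $k-1$ modulo $r$ contributes the factor $(1 - \z^{k-1}t^j)$ to both sides. Using $\det_{\BV '}(1 - t\iv w) = t^{-n'}\det_{\BV '}(t\cdot\id_{\BV '} - w)$, this immediately gives the identity $\w_{\Bla,\Bmu}(t\iv) = \wt\w'_{\Bla,\Bmu}(t)$.

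Next I would substitute $t \mapsto t\iv$ in the one-parameter form of (4.4.1) and multiply through by $\GG_W(t)$. Writing $\wt K_{\pm}(t) = K_{\pm}(t\iv) T$ and using $\wt\BOm = \GG_W(t)\,\BOm(t\iv)$, the equation becomes
\begin{equation*}
\wt K_-(t)\,\bigl(\GG_W(t)\,T\iv\,\vL(t\iv)\,T\iv\bigr)\,{}^t\!\wt K_+(t) = \wt\BOm.
\end{equation*}
Since the $a$-function is constant on each similarity class, $T$ acts as the scalar $t^{a(\vL)}$ on the block $\SC$, so the diagonal blocks of $\wt K_{\pm}(t)$ are $t^{a(\vL)} I_{\SC}$, matching condition (a). The middle factor $\wt\BLa := \GG_W(t)\,T\iv\,\vL(t\iv)\,T\iv$ is block-diagonal because both $\vL(t\iv)$ and $T$ are; its entries lie in $\BZ[t]$ because $\GG_{W '}(t)$ divides $\GG_W(t)$ in $\BZ[t]$ (by the formula $P_W(t)/P_{W '}(t) = \prod_{i=n'+1}^{n}(t^{ir}-1)/(t-1)$) and $\GG_{W '}(t)\wt\w'_{\Bla,\Bmu}(t) \in \BZ[t]$ by the formula preceding (4.5.5).

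Finally, uniqueness in (i) and the vanishing in (ii) are obtained by repeating the two-step induction via $(A_{\SC})$ and $(B^{\pm}_{\SC})$ from the proof of Theorem 4.4(ii) verbatim, since conjugation by $T$ and multiplication by $\GG_W(t)$ preserve both the block-triangular structure and the non-singularity of the diagonal blocks. The main obstacle is the second step above: verifying cycle-by-cycle the identity $\prod_k z_{\la^{(k)}(w)}(t) = \det_{\BV '}(1 - tw)\iv$ and carefully balancing the powers of $t$ and the ratio $\GG_W(t)/\GG_{W '}(t)$ so that the rescaled right-hand side matches $\wt\BOm$ as defined in (4.5.5); once this is settled, the rest is a direct transcription of Theorem 4.4.
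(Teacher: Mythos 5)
Your proposal is correct and follows essentially the same route as the paper: specialize to one parameter, substitute $t \mapsto t\iv$, conjugate by the diagonal matrix with entries $t^{a(\vL)}$, identify the right-hand side with $\wt\BOm$ via the identity $\prod_k z_{\la^{(k)}(w)}(t) = \det_{\BV'}(1-tw)\iv$ (the paper's (4.6.3)--(4.6.4)), and rerun the induction of Theorem 4.4 for uniqueness and part (ii). The only blemish is your parenthetical claim that the entries of $\wt\BLa$ lie in $\BZ[t]$ --- the justification you give actually concerns $\wt\BOm$, not $\wt\BLa$ --- but this claim is not part of the theorem and does not affect the argument.
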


\begin{proof}
We consider the formula (4.2.8), specialized to $\Bt = \Bt_0$.
We write $K_{\pm}(\Bt_0)$, $\vL(\Bt_0)$, $Z(\Bt_0)$ as
$K_{\pm}(t)$, $\vL(t)$, $Z(t)$, and write $X(\bold 0) = X(0)$.  
We will translate this formula to  the formula with respect to 
modified Kostka functions. 
It follows from the definition of $\wt K^{\pm}_{\vL,\vL'}(t)$, one can 
write as $\wt K_{\pm}(t) = K_{\pm}(t\iv)S$, where $S$ is the diagonal matrix 
whose $\vL\vL$-entry is $t^{a(\vL)}$. 
Then (4.2.8) is rewritten as 
\begin{equation*}
\tag{4.6.2}
\wt K_-(t)\vL'(t)\, {}^t\! \wt K_+(t) = \ol X(0)\iv HZ(t\iv)\iv H
                    \,{}^t\!X(0)\iv,
\end{equation*}
where $\vL'(t) = S\iv \vL(t\iv)S\iv$. 
Let $M = (M_{\vL,\vL'})$ be the right hand side of (4.6.2). 
As in the proof of Theorem 4.4, we see that $M_{\vL,\vL'} = 0$ if 
$d(\vL) \ne d(\vL')$. 
\par
Here we note the following relation. If $w_{\Bla}$ is an element in 
$W_{n',r}$ with type $\Bla \in \SP_{n',r}$, we have 
\begin{equation*}
\tag{4.6.3}
{\det}_{\BV}(t\cdot\id_{\BV} - w_{\Bla}) = 
      \prod_{k = 1}^r\prod_{j = 1}^{l(\la^{(k)})}(t^{\la^{(k)}_j} - \z^{k-1}). 
\end{equation*}
By comparing this with the definition of $z_{\Bla}(t)$ in 
(2.7.1) and (2.7.2), we have
\begin{equation*}
\tag{4.6.4}
z_{\Bla}(t\iv) = \frac{z_{\Bla}t^{n'}}{{\det}_{\BV}(t\cdot \id_{\BV} - w_{\Bla})}.
\end{equation*}
Assume that $\vL = \vL(\Bla), \vL' = \vL(\Bmu)$ 
with $\Bla,\Bmu \in \SP_{n',r}$. 
Note that $Z(t)$ is a diagonal matrix with $\vL\vL$-entry $z_{\vL}(t)\iv$.  
Substituting (4.6.4) into $Z(t\iv)\iv$, we can compute $M_{\vL, \vL'}$. 
By (4.6.2), we have
\begin{equation*}
M_{\vL,\vL'} = t^{n'}|W '|\iv \sum_{w \in W '}
       \frac{\x^{\Bla}(w)\ol\x^{\Bmu}(w)}{\det_{\BV}(t\cdot \id_{\BV} - w)} 
              = \wt\w'_{\Bla,\Bmu}(t).
\end{equation*}  
Hence if we put $\wt \vL(t) = \GG_W(t)\vL'(t)$, by (4.6.2), we obtain 
the required formula
\begin{equation*}
\wt K_-(t)\wt\vL(t)\,{}^t \!\wt K_+(t) = \wt\BOm. 
\end{equation*} 
Since $\wt K_{\pm}(t), \wt \vL(t)$ satisfy the condition (a), (b), 
the first assertion is proved. The assertion (ii) is 
proved in a similar way as in the proof of Theorem 4.4.
\end{proof}

\par\bigskip
\section{Generalized Green functions of Symplectic groups}

\para{5.1.}
Let $G$ be a connected reductive group over $\Bk$, where $\Bk$ is an algebraic closure 
of a finite field $\BF_q$ of $q$-elements, 
and $F : G \to G$ the Frobenius map corresponding to $\BF_q$, so that 
$G^F = G(\BF_q)$ is a finite reductive group. 
We denote by $G\uni$ the set of unipotent elements in $G$, and 
$G^F\uni$ the set of unipotent elements in $G^F$. 
Let $\SN_G$ be the set of pairs $(C, \SE)$, where $C$ is a unipotent class 
in $G$ and $\SE$ is a $G$-equivariant simple local system on $C$. 
We denote by $\SN_G^0$ the subset of $\SN_G$ consisting of $(C,\SE)$ such that 
$\SE$ is a cuspidal local system on $C$ ([L2]). 
Let $\ScS_G$ be the set of triples $(L, C_1, \SE_1)$, up to $G$-conjugate, 
where $L$ is a Levi subgroup of a parabolic subgroup $P$ of $G$, and 
$(C_1, \SE_1) \in \SN_L^0$. Take an $F$-stable $(L, C_1, \SE_1) \in \ScS_G$,  
namely, the triple such that $F(L) = L, F(C_1) = C_1$ and $F^*\SE_1 \simeq \SE_1$.  
We fix an isomorphism $\vf_1 : F^*\SE_1 \isom \SE_1$.  In [L3, 8.3], 
Lusztig 
constructed the semisimple perverse sheaf $K$ on $G$, associated to 
$(L,C_1, \SE_1)$, and the isomorphism $\vf : F^*K \isom K$ induced from $\vf_1$.   
He defined the generalized Green function $Q^G_{L,C_1,\SE_1, \vf_1}$ 
as the restriction to $G^F\uni$ of the characteristic function $\x_{K,\vf}$ on $G^F$,  
which is a $G^F$-invariant function on $G^F\uni$ with values in $\Ql$.
\par
In the case where $G = GL_n$, an $F$-stable maximal torus is written as 
$T_w$, twisted from the split maximal torus $T_0$ by $w \in S_n$.  
The $G^F$-conjugacy classes 
of $G^F\uni$ are parametrized by $\SP_n$, we denote by $u_{\nu} \in G^F\uni$ 
a representative of the class corresponding 
to $\nu \in \SP_n$.  
As the special case of $Q^G_{L,C_1,\SE_1, \vf_1}$, 
we have the Green function $Q^G_{T_w}$  on $G^F\uni$, which satisfies the relation
\begin{equation*}
\tag{5.1.1}
Q^G_{T_{w_{\mu}}} (u_{\nu}) =  Q_{\mu,\nu}(q) = 
\sum_{\la \in \SP_n}\x^{\la}(w_{\mu} )\wt K_{\la\nu}(q),
\end{equation*}
where $\x^{\la}$ is the irreducible character of $S_n$ corresponding to 
$\la \in \SP_n$, and $Q_{\mu,\nu}(t)$ is as in 4.2. 
\par
In view of the discussion in 4.2, (5.1.1) gives a combinatorial description of 
Green functions $Q^G_{T_w}$ in terms of Green polynomials $Q_{\mu,\nu}(t)$, 
or equivalently, of Kostka polynomials $K_{\la\nu}(t)$.  
In this section, we show, in the case of symplectic groups, that generalized 
Green functions can be described in a combinatorial way in terms of 
our generalized Green functions defined in 4.2, or equivalently, of Kostka functions 
$K_{\vL,\vL'}(t)$.

\para{5.2.}
From now on, assume that $G$ is the symplectic group $Sp_{2n}$
(with any characteristic). 
In this case, all the pairs $(C,\SE) \in \SN_G$ are $F$-stable.  
It is known that the set $\SN_G$ is described in terms of 2-symbols 
([L2] for $\ch \Bk \ne 2$, [LS] for $\ch\Bk = 2$).
We consider the 2-symbols $Z^{e,\Bs}_{n,D}$ as follows.
Put $e = 2, \Bs = (0,1)$ in the case where $\ch\Bk \ne 2$, 
and put $e = 4, \Bs = (0,2)$ in the case where $\ch\Bk = 2$.
In either case assume that $\a = 1$, namely, $m^{\bullet} = (m + 1, m)$, 
and let $D$ be the set of defects $\Bd = (d,0)$ or 
$(0,d)$ such that $d$ is odd.  
Then $Z^{e,\Bs}_{n,D}$ coincides with 
the set $X^{1,1}_n$ (resp. $X^{2,2}_n$) if $\ch\Bk \ne 2$ (resp. $\ch\Bk = 2$) 
in the notation in [LS]. 
Hereafter we write $Z^{e,\Bs}_{n,D}$ as $Z_{n,D}$. 
It is proved in [L2], [LS]  that 
$\SN_G$ is in bijection with $Z_{n,D}$ such that the unipotent classes 
in $G\uni$ correspond to the similarity classes in $Z_{n,D}$. 
Moreover, the set $D$ is in bijection with the set $\ScS_G$. 
For $(L, C_1, \SE_1) \in \ScS_G$ with $L \simeq (GL_1)^{n'} \times Sp_{2n - 2n'}$, 
$N_G(L)/L \simeq W_{n',2}$, there exists a natural bijection 
$Z_{n,\Bd} \isom W_{n',2}\wg$, which we denote by $\vL \mapsto \x^{\vL}$, 
such that the following generalized Springer correspondence holds 
([L2, Thm. 12.3], [LS, Thm. 2.4]).  
\begin{equation*}
\tag{5.2.1}
\coprod_{(L,C_1,\SE_1) \in \ScS_G}(N_G(L)/L)\wg \simeq 
       \coprod_{\Bd \in D}Z_{n, \Bd} = Z_{n,D}. 
\end{equation*}

\para{5.3.}
Let $(L, C_1, \SE_1) \in \ScS_G$ be corresponding to $\Bd \in D$. 
Put $\SW_L = N_G(L)/L$ and let $Z(L)$ be the center of $L$. 
Then $K$ can be decomposed into simple components
\begin{equation*}
\tag{5.3.1}
K \simeq \bigoplus_{\vL \in Z_{n,\Bd}}V_{\vL}\otimes A_{\vL},
\end{equation*}
where $V_{\vL}$ is an irreducible $\SW_L$-module with character $\x^{\vL}$, 
and $A_{\vL}$ is a simple perverse sheaf on $G$.
It is known that $A_{\vL}|_{G\uni} \simeq \IC((\ol C, \SE)[a_{\vL}]$, 
where $a_{\vL} = - \dim C - \dim Z(L)$, under the correspondence 
$\vL \lra (C,\SE)$.
By (5.3.1), $\vf : F^*K \isom K$ induces a canonical 
isomorphism $\vf_{\vL} : F^*A_{\vL} \isom A_{\vL}$ for $\vL \in Z_{n,\Bd}$
(which corresponds to $\f_{A_i}$ in [L4, 24.2]). 
We now define a function 
$X_{\vL}$ on $\ol C^F$ by 
\begin{equation*}
\tag{5.3.2}
X_{\vL}(g) = \sum_a(-1)^{a + a_{\vL}}\Tr(\vf_{\vL}, \SH^a_gA_{\vL})
      q^{-(a_{\vL} + r_{\vL})/2},  
\end{equation*}
for $g \in \ol C^F$, where $r_{\vL} = \dim \supp A_{\vL}$.
($X_{\vL}$ corresponds to $X_i$ in [L4, 24.2]). 
Note that $a_{\vL} + r_{\vL} = (\dim G - \dim C) - (\dim L - \dim C_1)$.  
\par
There exists an $F$-stable parabolic 
subgroup $P_0$ and an $F$-stable Levi subgroup $L_0$ of $P_0$, and 
$F$ acts trivially on $\SW_L = N_G(L_0)/L_0 \simeq W_{n',2}$.  Moreover any $F$-stable 
Levi subgroup $L$ can be obtained as $L_w$, by twisting $L_0$ by $w \in \SW_L$.  
For a pair $(C,\SE) \in \SN_G$ corresponding to $\vL \in Z_{n, \Bd}$, 
the  generalized Green function $Q^G_{L_w, C_1, \SE_1, \vf_1}$ 
can be written as   
\begin{equation*}
\tag{5.3.3}
Q^G_{L_w, C_1,\SE_1, \vf_1} = 
              \sum_{\vL \in Z_{n,\Bd}}\x^{\vL}(w)X_{\vL}q^{(a_{\vL} + r_{\vL})/2}.
\end{equation*} 
Thus the determination of $Q^G_{L,C_1,\SE_1,\vf_1}$ is reduced to 
the determination  
of various functions $X_{\vL}$ on $G^F\uni$ for $\vL \in Z_{n,\Bd}$. 

\para{5.4.}
Since $\SH^{a_{\vL}}A_{\vL}|_{G\uni} \simeq \SE$, 
$q^{-(a_{\vL} + r_{\vL})}\vf_{\vL}: F^*\SH^{a_{\vL}}A_{\vL} \to \SH^{a_{\vL}}A_{\vL}$ 
induces an isomorphism  $F^*\SE \isom \SE$, which we denote by $\psi_{\vL}$.  
It is known (under a suitable choice of $\vf_1$) 
that $\psi_{\vL}$ induces an automorphism of finite order $\SE_g \to \SE_g$ 
for any $g \in C^F$. We denote by $Y_{\vL}$ the characteristic function 
$\x_{\SE, \psi_{\vL}}$ on $C^F$, extended to the function on $G^F\uni$, namely 
\begin{equation*}
Y_{\vL}(g) = \begin{cases}
                \Tr(\psi_{\vL}, \SE_g) &\quad\text{ if } g \in C^F, \\
                0                  &\quad\text{ if } g \not\in C^F.
             \end{cases}
\end{equation*}
Then $\{ Y_{\vL} \mid \vL \in Z_{n,D}\}$ gives rise to a basis of the $\Ql$-space
of $G^F$-invariant functions of $G^F\uni$. 
We express $X_{\vL}$ as $X_{\vL} = \sum_{\vL' \in Z_{n,D}}p_{\vL, \vL'}Y_{\vL'}$
with $p_{\vL,\vL'} \in \Ql$, and consider the matrix 
$P = (p_{\vL,\vL'})$ indexed by $\vL, \vL' \in Z_{n,D}$.
We define a total order $\lve$ on $Z_{n,D}$ such that each similarity class 
gives an interval, and that $\vL' \lv \vL$ if $C' \subset \ol C$, where 
$C$ (resp. $C'$) corresponds to the similarity class containing $\vL$ (resp. $\vL'$).
(Note that in our case $a(\vL)$ coincides with $(\dim G\uni - \dim C)/2$ by 
[L5, 4.4],  
this agrees with the choice of $\lve$ in 3.5.)    
The following theorem was proved by Lusztig.  In the following, we consider 
the matrices with respect to the total order $\lve$, and regard them as block 
matrices according to the partition of $Z_{n,D}$ by similarity classes.  
\begin{thm}[{Lusztig [L4, Thm. 24.4]}]
The matrix $P$ is obtained as the unique solution of the 
equation 
\begin{equation*}
P\vL\,{}^t\!P = \Om.
\end{equation*}
Here $P$ is a lower triangular block matrix, where the diagonal block 
is the identity matrix, and $\vL$ is a diagonal block matrix. 
Moreover, $\Om = (\w_{\vL,\vL'})$ is defined as 
\begin{equation*}
\tag{5.5.1}
\w_{\vL,\vL'} = \frac{q^{-\dim G} q^{- (a_{\vL} + a_{\vL'})/2}|G^F|}
                 {|\SW_L|}\sum_{w \in \SW_L}
                   \frac{\x^{\vL}(w)\x^{\vL'}(w)}{|Z(L_w)^F|}
\end{equation*}
if $d(\vL) = d(\vL')$, and $\w_{\vL, \vL'} = 0$ if $d(\vL) \ne d(\vL')$. 
\end{thm}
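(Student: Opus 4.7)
The plan is to mirror the proof of Theorem 4.4, replacing the algebraic inner product $\lp\ ,\ \rp$ by the geometric one
\begin{equation*}
\langle f, h\rangle_{G^F\uni} = \sum_{g \in G^F\uni} f(g)\ol{h(g)}
\end{equation*}
on $G^F$-invariant $\Ql$-valued functions on $G^F\uni$. The matrix identity $P\vL\,{}^t\!P = \Om$ will be obtained by computing $\langle X_{\vL}, X_{\vL'}\rangle_{G^F\uni}$ in two different ways, and the block-triangular uniqueness will then follow by the same linear-algebraic induction on similarity classes that was used for Theorem 4.4(ii).

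The first computation uses the expansion $X_{\vL} = \sum_{\vL''} p_{\vL,\vL''} Y_{\vL''}$. Because $Y_{\vL''}$ is supported on $C''^F$, where $C''$ is the unipotent class corresponding to the similarity class of $\vL''$, the pairing $\langle Y_{\vL''}, Y_{\vL'''}\rangle_{G^F\uni}$ vanishes unless $\vL''$ and $\vL'''$ lie in the same similarity class. This produces a block-diagonal matrix $\vL = (\langle Y_{\vL''}, Y_{\vL'''}\rangle_{G^F\uni})$ with blocks indexed by similarity classes, and gives
\begin{equation*}
\langle X_{\vL}, X_{\vL'}\rangle_{G^F\uni} = (P\vL\,{}^t\!P)_{\vL,\vL'}.
\end{equation*}

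The second computation uses (5.3.3). Orthogonality of the characters $\x^{\vL}$ of $\SW_L \simeq W_{n',2}$ inverts (5.3.3) to
\begin{equation*}
q^{(a_{\vL}+r_{\vL})/2}X_{\vL} = |\SW_L|^{-1}\sum_{w \in \SW_L}\x^{\vL}(w)\, Q^G_{L_w, C_1, \SE_1, \vf_1}.
\end{equation*}
The geometric content then enters through the inner products $\langle Q^G_{L_w,\ldots}, Q^G_{L_{w'},\ldots}\rangle_{G^F\uni}$ for two $F$-stable twists of one fixed cuspidal datum $(L, C_1, \SE_1)$. Lusztig's orthogonality formula for generalized Green functions from cuspidal data (the deep input from [L3]) evaluates this pairing as $|G^F|/|Z(L_w)^F|$ when $w$ and $w'$ are $\SW_L$-conjugate and zero otherwise; for cuspidal data supported on distinct defects $\Bd$, it vanishes identically. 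Substituting, collecting the normalizing powers of $q$, and using the orthogonality of $\x^{\vL}(w)\x^{\vL'}(w')$ over conjugacy classes of $\SW_L$, the resulting matrix entry matches (5.5.1) exactly, yielding $\BOm = (\w_{\vL,\vL'})$.

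Uniqueness and the defect-separation statement (ii) are then purely linear-algebraic. The triangularity of $P$ with respect to $\lve$ is built in: $X_{\vL}$ is supported on $\ol C^F$ by the support condition on $A_{\vL}$, and the order $\lve$ was chosen in 5.4 to refine the closure order. With $P$ lower triangular block with identity diagonal block and $\vL$ block diagonal, solving $P\vL\,{}^t\!P = \Om$ inductively in $\lve$ determines $P$ and $\vL$ uniquely. The defect-separation is obtained by the same $(A_\SC)$/$(B^{\pm}_\SC)$ induction used in the proof of Theorem 4.4(ii), transported verbatim to the present setting. The main obstacle is thus not in the bookkeeping but in the geometric input, namely Lusztig's inner product formula for two generalized Green functions from a common cuspidal datum; every other step is either routine character theory for $W_{n',2}$ or a direct translation of the framework already set up in Sections~3--4.
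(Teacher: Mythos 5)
The paper offers no proof of this statement to compare against: Theorem 5.5 is quoted as Lusztig's result ([L4, Thm.~24.4]), introduced by ``The following theorem was proved by Lusztig,'' and no argument is given. Your reconstruction does follow the shape of Lusztig's original proof: compute $\sum_{g \in G^F\uni} X_{\vL}(g)X_{\vL'}(g)$ once via the $Y$-basis (giving $P\vL\,{}^t\!P$, with $\vL$ block diagonal because each $Y_{\vL''}$ is supported on a single class) and once via the inversion of (5.3.3) and the orthogonality formula for generalized Green functions attached to a common cuspidal datum, then run the triangular linear-algebra induction for uniqueness. So the strategy is sound and is essentially the one in [L4, 24.3--24.4].

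Two points need repair before this is a proof rather than a plan. First, the value you quote for $\langle Q^G_{L_w,C_1,\SE_1,\vf_1},\, Q^G_{L_{w'},C_1,\SE_1,\vf_1}\rangle$ — namely $|G^F|/|Z(L_w)^F|$ for $\SW_L$-conjugate $w,w'$ — cannot be literally correct: inserting it into the double sum over $(w,w')$ and collapsing conjugate pairs leaves a spurious factor $|Z_{\SW_L}(w)|^{-1}$ relative to (5.5.1), and supplies no power of $q$ to produce the prefactor $q^{-\dim G}q^{-(a_{\vL}+a_{\vL'})/2}$ after the normalizations $q^{-(a_{\vL}+r_{\vL})/2}$ from (5.3.3) are accounted for. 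The actual orthogonality formula ([L3, Thm.~9.2]) involves a sum over the elements $n \in G^F$ conjugating one twisted datum to the other (which restores the factor $|Z_{\SW_L}(w)|$) together with $|L_w^F|$, $|Z^0(L_w)^F|$ and an explicit $q$-power; reconciling all of this with (5.5.1), using $a_{\vL} = -\dim C - \dim Z(L)$ and $a_{\vL}+r_{\vL} = (\dim G - \dim C) - (\dim L - \dim C_1)$, is exactly where the content of the theorem lives, and you assert the match rather than perform it. Second, the uniqueness induction requires the diagonal blocks of $\vL$ to be nonsingular; this should be said explicitly — it holds because the restrictions of the $Y_{\vL'}$ in one similarity class to $C^F$ are, by Theorem 5.7, the irreducible characters of $A(u)$ evaluated on the classes $C^F_a$, whose Gram matrix is invertible by second orthogonality. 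With those two items supplied, the argument goes through.
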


\para{5.6.}
Theorem 5.5 shows that there is an algorithm of computing the matrix $P$. 
Thus the determination of $Q^G_{L,C_1, \SE_1, \vf_1}$ is reduced to the 
determination of the functions $Y_{\vL}$. Note that the definition of $Y_{\vL}$
depends on the choice of $\vf_1$. Even if $\vL \sim \vL'$ belong 
to the same unipotent class $C$, $Y_{\vL}$ and $Y_{\vL'}$ have no relations 
if $d(\vL) \ne d(\vL')$. However, we can show that there exists a canonical 
choice of $Y_{\vL}$. In order to describe $Y_{\vL}$, we need to choose 
a good representative for each class $C^F$. In [S1], it was shown, in the 
case where $\ch \Bk \ne 2$, that there exists good representatives in $C^F$, called 
split elements, which behave well with respect to the computation 
of Green functions, namely the case where $L$ is a maximal torus $T$, 
$C_1$ is the identity class $\{ 1\}$, and $\SE_1$ is the constant sheaf $\Ql$. 
By using split elements, $Y_{\vL}$ can be computed for $\vL \in Z_{n, \Bd^{\bullet}}$.
In [S5], it was shown that those split elements behave well also for any 
triple $(L, C_1, \SE_1)$. Also the case of $\ch\Bk = 2$ was discussed there. 
\par
In the case of symplectic groups, if we fix $u \in C^F$, and put 
$A(u) = Z_G(u)/Z_G^0(u)$, then $A(u)$ is an abelian group on which $F$ 
acts trivially.  The $G^F$-classes of $C^F$ are parametrized by $A(u)$.  
We denote by $C^F_a$ the $G^F$-class in $C^F$ corresponding to $a \in A(u)$. Also 
any $G$-equivariant simple local system $\SE$ on $C$ is parametrized by 
$A(u)\wg$. We write $\SE_{\r}$ the local system on $C$ corresponding to 
$\r \in A(u)\wg$. We have the following result.
\begin{thm}[{[S5, Cor. 4.4]}]  
Assume that $G = Sp_{2n}$.  There exists a good representative 
for $C^F$, called the split element, satisfying the following properties.
For any $(L, C_1, \SE_1) \in \ScS^F_G$, take the split element $u_1 \in C_1^F$, 
and define $\vf_1 : F^*\SE_1 \isom \SE_1$ so that the induced map 
$(\SE_1)_{u_1} \to (\SE_1)_{u_1}$ is the identity map. 
Then for any $(C,\SE) \lra \vL \in Z_{n, \Bd}$, $\psi_{\vL} : F^*\SE \isom \SE$
satisfies the property that the induced map $\SE_u \to \SE_u$ is the identity 
map for the split element $u \in C^F$.  In particular, if $\SE = \SE_{\r}$ 
for $\r \in A(u)\wg$, we have  
\begin{equation*}
Y_{\vL}(g) = \begin{cases}
                 \r(a) &\quad\text { if }  g \in C^F_a, \\
                   0   &\quad\text{ if } g \notin C^F. 
             \end{cases} 
\end{equation*}
\end{thm}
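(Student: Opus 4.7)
The statement is quoted as [S5, Cor.~4.4], so one expects the proof in the paper to largely reduce to that reference; here I sketch the strategy that such a proof follows. The first step is to recall the notion of a \emph{split element} $u \in C^F$ from [S1] (in odd characteristic) and [S5] (in characteristic two). For $G = Sp_{2n}$ the component group $A(u) = Z_G(u)/Z_G^0(u)$ is a finite elementary abelian $2$-group on which $F$ acts trivially. A split element is a distinguished $\BF_q$-rational representative of $C$ chosen so that every $G$-equivariant simple local system $\SE$ on $C$ admits a canonical Frobenius trivialisation inducing the identity on $\SE_u$. In the base case $L = T$, $C_1 = \{ 1 \}$, $\SE_1 = \Ql$, this is precisely the choice used in [S1] to give a combinatorial description of the ordinary Green functions $Q^G_{T_w}$, and what must be proved is that the same representatives are adapted to an arbitrary cuspidal datum $(L, C_1, \SE_1)$.

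The core of the argument is to propagate this identity normalisation through Lusztig's parabolic induction construction of $K$. Recall that $K|_{G\uni}$ is obtained as a direct image along a proper map $\pi : \wt Y \to G\uni$, where $\wt Y$ parametrises pairs $(gP, z)$ with $g\iv z g \in C_1 U_P$ and the coefficient system on $\wt Y$ is pulled back from $\SE_1$. For $u \in C^F$ the fiber $\pi\iv(u)$ is an $\BF_q$-variety whose cohomology computes the stalks $\SH^i_u K$, and the decomposition (5.3.1) refines this cohomology into $\SW_L$-isotypic components, one of which realises $(\SE_\r)_u$ for $\SE = \SE_\r$. One then shows that, if both $u \in C^F$ and $u_1 \in C_1^F$ are split, the $\BF_q$-rational structure on $\pi\iv(u)$ splits compatibly with the embedding $u_1 \hookrightarrow C_1$, so that the Frobenius induced on $(\SE_\r)_u$ is the identity whenever $\vf_1$ has been normalised to be the identity on $(\SE_1)_{u_1}$.

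The main obstacle is precisely this compatibility. One must verify that the canonical $\BF_q$-splittings of the fibers $\pi\iv(u)$ attached to split elements $u$ are, as $F$-equivariant data, compatible with the split structure on $C_1$ at $u_1$, in such a way that the $A(u)$-action permutes the summands without introducing nontrivial phases. This reduces to an explicit analysis of centralisers of unipotent elements in $Sp_{2n}$ and the organisation of parabolic induction of admissible cuspidal pairs. In odd characteristic a direct computation using Jordan block decompositions suffices, whereas in characteristic two one has to combine Spaltenstein's classification of unipotent classes with a case-by-case inspection of the possible triples $(L, C_1, \SE_1)$; this is the technical heart of [S5].

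Granting this compatibility, the final assertion is immediate. Any $g \in C^F_a$ is $G^F$-conjugate to $u\dot a$ for some lift $\dot a \in Z_G(u)$ of $a \in A(u)$, and since the $A(u)$-equivariant structure on $\SE_\r$ is given by the character $\r$, transport of the identity on $\SE_u$ to $\SE_g$ introduces the scalar $\r(a)$. Hence $Y_\vL(g) = \Tr(\psi_\vL, \SE_g) = \r(a)$ on $C^F_a$, while $Y_\vL(g) = 0$ off $C^F$ by the support condition on $\SE = \SE_\r$.
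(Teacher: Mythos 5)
The paper offers no proof of this statement at all: it is imported verbatim as [S5, Cor.~4.4], so your decision to defer the substance to that reference matches the paper's own treatment exactly. Your concluding derivation of $Y_{\vL}(g)=\r(a)$ on $C^F_a$ from the identity normalization at the split element is the correct and standard computation, and the intermediate sketch of how [S5] propagates the normalization through Lusztig's induction is a plausible reconstruction, though there is nothing in this paper to check it against.
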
   

\para{5.8.}
We consider the Kostka functions associated to the 2-symbols $Z_{n,D}$.
As in 3.10, we write them  as $K_{\vL, \vL'}(t), \wt K_{\vL, \vL'}(t)$, etc. 
 by ignoring the signature. 
The following result gives a combinatorial description of 
generalized Green functions.

\begin{thm}   
For any $\vL \in Z_{n,\Bd}$, the function $X_{\vL}$ can be written as 
\begin{equation*}
X_{\vL} = \sum_{\vL' \in Z_{n,D}}q^{-a(\vL)}\wt K_{\vL, \vL'}(q)Y_{\vL'}.
\end{equation*}
\end{thm}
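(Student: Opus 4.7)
The strategy is to compare the algorithmic characterization of the transition matrix $P = (p_{\vL,\vL'})$ defined by $X_{\vL} = \sum_{\vL' \in Z_{n,D}} p_{\vL,\vL'}Y_{\vL'}$ (Theorem 5.5, due to Lusztig) with the algorithmic characterization of the modified Kostka matrix $\wt K(q)$ given by our Theorem 4.6, specialized to $r = 2$ (where $\wt K^+_{\vL,\vL'}(t) = \wt K^-_{\vL,\vL'}(t) = \wt K_{\vL,\vL'}(t)$ by 3.10). The claim amounts to showing $SP = \wt K(q)$ for the diagonal matrix $S = \Diag(q^{a(\vL)})_{\vL \in Z_{n,D}}$.

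Left and right multiplication of Lusztig's identity $P\vL\,{}^tP = \Om$ by the symmetric diagonal matrix $S$ yields $(SP)\vL\,{}^t(SP) = S\Om S$. By Theorem 5.5 and the fact that $a(\vL)$ is constant on similarity classes, $SP$ is lower-triangular-block with diagonal blocks $q^{a(\vL)}I_{\SC}$, and $\vL$ remains a diagonal-block matrix. Thus $(SP,\vL)$ satisfies the block-structural hypotheses of Theorem 4.6 with right-hand side $S\Om S$.

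The technical heart of the argument is the matrix identity $S\Om S = q^{c}\wt\BOm|_{t=q}$ for some integer $c$ independent of $\vL,\vL'$, i.e., a global scalar. For $d(\vL) \ne d(\vL')$ both sides vanish by (4.3.2) and Theorem 5.5. For $d(\vL) = d(\vL') = \Bd$, with $\vL = \vL(\Bla),\vL' = \vL(\Bmu)$ and $\Bla,\Bmu \in \SP_{n',2}$, the ratio
$\wt\w_{\vL,\vL'}(q)/(q^{a(\vL)+a(\vL')}\w_{\vL,\vL'})$ must be shown to be a uniform power of $q$. The ingredients are: (a) $\GG_{W_{n,2}}(q) = |Sp_{2n}(\BF_q)|$, which follows by comparing (4.5.1), (4.5.3) with the standard order formula (both equal $q^{n^2}\prod_{i=1}^{n}(q^{2i} - 1)$); (b) $|Z(L_w)^F| = \det_{\BV'}(q\cdot\id_{\BV'} - w)$ for $w \in \SW_L = W_{n',2}$, using that $Z(L)^0 \simeq (\GG_m)^{n'}$ carries the reflection representation of $W_{n',2}$; (c) the dimension relations $a_{\vL} + r_{\vL} = (\dim G - \dim C) - (\dim L - \dim C_1)$, $a(\vL) = (\dim G\uni - \dim C)/2$, and $\dim G - \dim G\uni = n$, which combine to make all $\vL$-dependent terms in the exponents cancel; and (d) the reality $\ol{\x^{\vL'}(w)} = \x^{\vL'}(w)$ of characters of the Weyl group $W_{n',2}$. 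Once the identity $S\Om S = q^{c}\wt\BOm$ is established, the scalar $q^{c}$ can be absorbed into the diagonal-block factor ($\wt\BLa = q^{-c}\vL$), and the uniqueness in Theorem 4.6 forces $SP = \wt K(q)$, i.e.\ $p_{\vL,\vL'} = q^{-a(\vL)}\wt K_{\vL,\vL'}(q)$. The vanishing $p_{\vL,\vL'} = 0$ for $d(\vL)\ne d(\vL')$ is Theorem 4.6(ii).

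The main obstacle is step (c): the careful bookkeeping of $q$-powers to prove that the ratio of corresponding entries of $S\Om S$ and $\wt\BOm$ really is uniform in $\vL,\vL'$ and in the defect $\Bd$ (and so can be absorbed into the diagonal factor). The key cancellation relies on the specific form of Lusztig's geometric normalization factor $q^{-(a_{\vL}+r_{\vL})/2}$ in (5.3.2), together with the fact that, for $Sp_{2n}$, the ``defect-dependent'' shifts coming from $\dim Z(L) = n'$ appear symmetrically in both $\w_{\vL,\vL'}$ (through $a_{\vL}$) and in $\wt\w_{\vL,\vL'}(q)$ (through the $t^{n'}$ in (4.5.4)), so that only the rank constant $\dim G - \dim G\uni = n$ survives.
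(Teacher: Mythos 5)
Your proposal is correct and follows essentially the same route as the paper: conjugate Lusztig's equation $P\vL\,{}^t\!P = \Om$ by $S = \Diag(q^{a(\vL)})$, identify the result entrywise with $q^{-n}\wt\BOm(q)$ (the paper's (5.9.1) confirms your prediction that the uniform scalar is exactly the rank power $q^{-n}$, via $a_{\vL}/2 = a(\vL) - (\dim G - n + n')/2$ and $|Z(L_w)^F| = \det_{\BV}(q\cdot\id_{\BV}-w)$), and invoke the uniqueness in Theorem 4.6. The bookkeeping you flag as the main obstacle does close exactly as you describe, with the $n'$-dependent exponents cancelling between $a_{\vL}$ and the $t^{n'}$ in (4.5.4).
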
  

\begin{proof}
Here $\SW_L \simeq W_{n',2}$.  If $w \in W_{n',2}$ has type 
$(\nu, \s) \in \SP_{n',2}$ with $\nu : \nu_1 \ge \cdots \ge \nu_k > 0$, 
$\s : \s_1 \ge \cdots \ge \s_{l} > 0$,  then we have
\begin{equation*}
|Z(L_w)^F| = \prod_{1 \le i \le k}(q^{\nu_i} - 1)\prod_{1 \le j \le l}(q^{\s_j} + 1).
\end{equation*} 
Hence $|Z(L_w)^F| = \det_{\BV}(q\cdot \id_{\BV} - w)$ by (4.6.3). 
Assume that $d(\vL) = d(\vL')$. 
Since $|G^F| = \GG_W(q)$ for $W = W_{n,2}$, 
by comparing (5.5.1) with (4.5.4) for $W_{n',2}$, we have 
\begin{equation*}
\w_{\vL,\vL'} = q^{-\dim G - n'}q^{-(a_{\vL} + a_{\vL'})/2}\GG_W(q)\wt\w'_{\vL,\vL'}(q).
\end{equation*} 
Since $a(\vL) = (\dim G - n - \dim C)/2$ for any $\vL$ belonging to the class $C$, 
we have 
\begin{equation*}
a_{\vL}/2 = - (\dim C + n')/2 = a(\vL) - (\dim G - n + n')/2.   
\end{equation*}
It follows that
\begin{equation*}
\tag{5.9.1}
q^{a(\vL) + a(\vL')}\w_{\vL,\vL'} = q^{-n}\wt\w_{\vL,\vL'}(q).
\end{equation*}
On the other hand, we have 
$\w_{\vL,\vL'} = \wt\w_{\vL,\vL'}(q) = 0$ if $d(\vL) \ne d(\vL')$.  
\par
Let $S$ be the diagonal matrix whose $\vL\vL$-entry equals to $q^{a(\vL)}$.
It follows from (5.9.1) that we have an equation 
$(SP)(q^n \vL) \,{}^t(SP) = \wt\BOm(q)$, which satisfies the same property 
as in (4.6.1), specialized to $t = q$.  
Thus by the uniqueness of the solution of (4.6.1), we have $SP = \wt K(q)$, 
namely, $p_{\vL, \vL'} = q^{-a(\vL)}\wt K_{\vL, \vL'}(q)$ 
for any $\vL, \vL'$. The theorem is proved.   
\end{proof}

\remarks{5.10.}
(i) \ Theorem 5.9 implies that $\wt K_{\vL, \vL'}(t)$, and so $K_{\vL,\vL'}(t)$, 
do not depend on the choice of the total order $\lve$  in 3.5.
\par
(ii) \ Theorem 5.9 also implies that $\wt K_{\vL,\vL'}(t) \in \BZ[t]$.
In fact, $p_{\vL,\vL'} = q^{-a(\vL)}\wt K_{\vL, \vL'}(q)$ is regarded 
as a rational function on $q$, namely, there exists a rational function 
$p_{\vL,\vL'}(t) \in \BQ(t)$
such that $p_{\vL,\vL'}(q)$ coincides with $p_{\vL,\vL'}$.
It is known by [L4, (24.5.2)] that $p_{\vL,\vL'}(q^s)$ is an integer for 
any $s \in \BZ_{\ge 0}$, which implies that $p_{\vL, \vL'}(t)$ 
is a polynomial in $t$. 
Thus we have $\wt K_{\vL, \vL'}(t) \in \BZ[t]$.
Note that in the case where $p$ is good, namely if $p \ne 2$, 
it was known by [L4, Thm. 24.8]
that $p_{\vL, \vL'}(q)$ is a polynomial in $q$.   
In the case where $p = 2$, our result gives a weaker version of [L4, Thm. 24.8]
(here the purity of the Frobenius eigenvalues are not assumed). 
\par
(iii) \ 
If $\deg \wt K_{\vL,\vL'} \le a(\vL')$, we have 
$K_{\vL, \vL'}(t) \in \BZ[t]$.  
It is likely that $K_{\vL, \vL'}(t)$ is a polynomial 
with degree $a(\vL') - a(\vL)$.

\para{5.11.}
Under the setting in 5.1, we consider the Green function 
$Q^G_{T, \vf_1}$ on $G^F\uni$, which is a special case of the generalized Green 
functions $Q^G_{L, C_1, \SE_1, \vf_1}$, where $L = T$ is an 
$F$-stable maximal torus, $C_1 = \{ e \}$, $\SE_1$ is the constant
sheaf $\Ql$ on $C_1$, and $\vf_1 : F^*\Ql \isom \Ql$ is the 
canonical isomorphism. For any integer $r \ge 1$, we can also
consider $\vf_1^r: (F^r)^*\Ql \isom \Ql$, and the Green function 
$Q^G_{T, \vf_1^r}$ on $G^{F^r}$.  The following result was 
recently proved by M. Geck [G] in a full generality.

\begin{thm}[{[G]}]  
Let $G$ be a connected reductive group defined over $\BF_q$, and 
$u \in G^F$ a unipotent element.  Assume that $r$ is a prime
such that $r$ does not divide the order of $G^{F^r}$.  Then 
\begin{equation*}
Q^G_{T,\vf_1}(u) \equiv Q^G_{T, \vf_1^r}(u) \mod r.
\end{equation*}
\end{thm}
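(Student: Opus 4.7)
The strategy is to combine two classical ingredients: the Lusztig--Shoji polynomial expression of the Green function in terms of characters of the relative Weyl group and integer-valued Kostka-type polynomials, together with Fermat's little theorem. For a fixed $F$-stable maximal torus $T$ of twist class $w \in W$, one has the combinatorial formula
\begin{equation*}
Q^G_{T_w, \vf_1}(u) = \sum_{\rho \in \Irr W^F} \rho(w)\, \wt K_{\rho, u}(q),
\end{equation*}
where $\wt K_{\rho, u}(t) \in \BZ[t]$ is a modified Kostka polynomial depending only on the Springer-geometric data attached to the pair $(u, \rho)$. This is (5.1.1) for $GL_n$, and holds in the generality we need by the work of Green, Lusztig, Shoji, and others (with the extension to symplectic groups in any characteristic being exactly the contents of Theorem 5.9 of this paper).

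The first step is to pass from $F$ to $F^r$: the same torus $T$ is $F^r$-stable with twist class $w^{(r)} = w\, F(w)\, F^2(w) \cdots F^{r-1}(w)$, and the analogous formula becomes
\begin{equation*}
Q^G_{T, \vf_1^r}(u) = \sum_{\rho \in \Irr W^{F^r}} \rho(w^{(r)})\, \wt K_{\rho, u}(q^r),
\end{equation*}
with the same polynomials $\wt K_{\rho, u}$, since these encode only Springer-theoretic data unchanged by replacing $F$ with $F^r$; only the Frobenius parameter changes from $q$ to $q^r$.

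The second step is to apply Fermat's little theorem termwise. Since $q$ is an integer, $q^r \equiv q \pmod{r}$, and because $\wt K_{\rho,u}(t) \in \BZ[t]$, we get $\wt K_{\rho, u}(q^r) \equiv \wt K_{\rho, u}(q) \pmod{r}$ for every $\rho$. The hypothesis $r \nmid |G^{F^r}|$ forces $w$ and $u$ to have order coprime to $r$, and also guarantees $r \nmid |W|$. Consequently the classical Frobenius congruence on character values of finite groups yields $\rho(w^{(r)}) \equiv \rho(w) \pmod{\mathfrak{p}}$, where $\mathfrak{p}$ is a prime of $\BZ[\zeta_{|W|}]$ above $r$. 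Multiplying these two congruences and summing over $\rho$ gives an $\mathfrak{p}$-adic congruence between two algebraic integers; but both Green functions are rational integers, so $\mathfrak{p} \cap \BZ = r\BZ$ upgrades the congruence to one modulo $r$, which is the desired conclusion.

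The main obstacle is Step~1: one must invoke the integral polynomial expression for $Q^G_{T, \vf_1}$ in full generality (arbitrary connected reductive $G$, possibly bad characteristic), which is a deep result relying on Springer theory, Lusztig's character sheaves, and Shoji's algorithm (Theorem 4.6 in this paper). Once the polynomial structure is in hand, the precise bookkeeping of the Shintani-type twist $w \mapsto w^{(r)}$ and the identification of $\Irr W^{F^r}$ with $\Irr W^F$ (via $F$-conjugacy orbits) requires care, but the actual congruence itself is then a direct consequence of Fermat, making this essentially a purely combinatorial corollary of the algorithm developed in Section~4.
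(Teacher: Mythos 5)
First, a point of reference: the paper does not prove this statement at all --- it is quoted from Geck's preprint [G] (``recently proved by M.~Geck in a full generality''), so there is no in-paper proof to compare against. The closest in-paper argument is the proof of the analogous Theorem 5.14 for generalized Green functions of symplectic groups, and your skeleton --- an integral polynomial expression for the coefficients, Fermat's little theorem applied termwise, and an argument that the remaining (non-polynomial) data are unchanged when $F$ is replaced by $F^r$ --- is exactly the skeleton of that proof. As a proof of the symplectic analogue your plan is therefore essentially the paper's.

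As a proof of the statement actually asserted (arbitrary connected reductive $G$, any characteristic), however, there are genuine gaps. (a) Your Step 1 assumes $Q^G_{T_w,\vf_1}(u)=\sum_{\rho}\rho(w)\wt K_{\rho,u}(q)$ with $\wt K_{\rho,u}(t)\in\BZ[t]$ independent of the Frobenius; this is precisely the hard input, which the paper secures only for $Sp_{2n}$ (Theorem 5.9 and Remark 5.10(ii)) --- for general $G$ it records only $p_{\vL,\vL'}(t)\in\BQ(t)$ together with integrality of values (5.15.1), and in bad characteristic the full polynomial formula is not available off the shelf. (b) More seriously, your $\wt K_{\rho,u}$ silently absorbs the local-system values $Y_{\vL'}(u)$. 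When $F$ is replaced by $F^r$, the $G^{F^r}$-class of the \emph{fixed} element $u$ inside $C^{F^r}$ is labelled by the norm $a^{(r)}=aF(a)\cdots F^{r-1}(a)\in A_G(u)$ rather than by $a$, so $Y^{(r)}_{\vL'}(u)=Y_{\vL'}(u)$ is not automatic; this is exactly the separate step (5.14.1), which the paper proves using $A_G(u)\simeq(\BZ/2\BZ)^c$ and $r$ odd, and which for general $G$ (where $A_G(u)$ may be $S_3$, $S_4$, $S_5$) requires an argument you do not supply. (c) The congruence $\rho(w^{(r)})\equiv\rho(w)$ modulo a prime above $r$ that you invoke is, as stated, only $\rho(w^{(r)})\equiv\rho(w)^{r}$; one needs the rationality (integrality) of Weyl-group character values to conclude, and for non-split $G$ the Green-function formula involves extensions $\wt\rho(wF)$ of $F$-stable characters, so the comparison between $F$ and $F^r$ is a Shintani-descent statement rather than the substitution $w\mapsto w^{r}$. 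None of these is fatal in principle, but each is a real missing step, and together they explain why the general statement is a theorem of [G] rather than a corollary of the algorithm of Section 4.
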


\para{5.13.}
We consider a similar problem for the generalized Green function
$Q^G_{L,C_1, \SE_1, \vf_1}$ on $G^F\uni$ for the case of symplectic 
groups. 
Fix an $F$-stable triple $(L, C_1, \SE_1) \in \ScS_G$ as in 5.1.
We choose a split element $u_1 \in C_1^F$ and an isomorphism 
$\vf_1 : F^*\SE_1 \isom \SE_1$ as in Theorem 5.7.  
Note that by [S5], if $u_1$ is split for $C_1^F$, then 
$u_1$ is also split for $C_1^{F^r}$ for any $r \ge 1$. 
We define $\vf_1^r: (F^r)^*\SE_1 \isom \SE_1$ by 
\begin{equation*}
\vf_1^r = \vf\circ F^*(\vf_1) \circ \cdots \circ (F^*)^{r-1}(\vf_1)
    : (F^*)^r\SE_1 \isom \cdots \isom (F^*)^2\SE_1 \isom F^*\SE_1 \isom \SE_1.
\end{equation*}
Then $(L, C_1, \SE_1)$ is also $F^r$-stable, and one can consider 
the generalized Green function $Q^G_{L, C_1, \SE_1, \vf_1^r}$ on $G^{F^r}\uni$. 
Here note that for any $u \in G^F\uni$, $Q^G_{L,C_1, \SE_1, \vf_1}(u) \in \BZ$. 
This follows from the fact that $p_{\vL,\vL'}(q) \in \BZ[q]$, and the description 
of $Y_{\vL}$ in Theorem 5.7. We have the following result.

\begin{thm} 
Assume that $G$ is a symplectic group. 
Let $r$ be a prime such that $r$ does not divide the order of $G^{F^r}$. 
For any $u \in G^F\uni$, we have 
\begin{equation*}
Q^G_{L,C_1, \SE_1, \vf_1}(u) \equiv Q^G_{L, C_1, \SE_1, \vf_1^r}(u)  \mod r.
\end{equation*}
\end{thm}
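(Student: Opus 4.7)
The plan is to express both sides of the congruence via formula (5.3.3) combined with Theorem 5.9, reducing the statement to two elementary ingredients: Fermat's little theorem applied to the $q$-polynomial coefficients, and the equality $Y^F_{\vL'}(u) = Y^{F^r}_{\vL'}(u)$. Writing $L = L_w$ for some $w \in \SW_L \simeq W_{n',2}$, one obtains
\begin{equation*}
Q^G_{L_w, C_1, \SE_1, \vf_1}(u) = \sum_{\vL \in Z_{n,\Bd}}\sum_{\vL' \in Z_{n,\Bd}} \x^{\vL}(w)\,q^{(a_\vL + r_\vL)/2}\,p_{\vL,\vL'}(q)\,Y^F_{\vL'}(u),
\end{equation*}
where $p_{\vL,\vL'}(t) = t^{-a(\vL)}\wt K_{\vL,\vL'}(t) \in \BZ[t]$ by Remark 5.10~(ii), and $\x^{\vL}(w) \in \BZ$ since all characters of the Weyl group $W_{n',2}$ of type $B$ are rational integers. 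The same expression computes $Q^G_{L_w, C_1, \SE_1, \vf_1^r}(u)$ with the substitutions $q \mapsto q^r$ in the polynomial coefficients and $Y^F_{\vL'} \mapsto Y^{F^r}_{\vL'}$, because Theorem 5.9 is purely combinatorial and applies verbatim to the Frobenius $F^r$ on $G$ over $\BF_{q^r}$.

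Next I compare $Y^F_{\vL'}(u)$ with $Y^{F^r}_{\vL'}(u)$. Fix the split representative $u_0 \in C^F$ of Theorem 5.7; by [S5] it remains split for $F^r$, and the composite $\vf_1^r = \vf_1 \circ F^*(\vf_1) \circ \cdots \circ (F^*)^{r-1}(\vf_1)$ induces the identity on $(\SE_1)_{u_0}$, so that Theorem 5.7 applies simultaneously to both settings. For $u \in C^F_a$ write $u = g u_0 g\iv$ with $h := g\iv F(g) \in Z_G(u_0)$ mapping to $a \in A(u_0)$. A standard iteration gives $g\iv F^r(g) = h \cdot F(h) \cdots F^{r-1}(h)$, and since $F$ acts trivially on $A(u_0)$ the right hand side maps to $a^r$ in $A(u_0)$; hence $u \in C^{F^r}_{a^r}$ and $Y^{F^r}_{\vL'}(u) = \r(a)^r$ for $\SE = \SE_\r$. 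Because $|Sp_{2n}(\BF_{q^r})|$ is always even while $r \nmid |G^{F^r}|$, the prime $r$ must be odd; and $A(u)$ is an elementary abelian $2$-group for $G = Sp_{2n}$, so $\r(a) \in \{\pm 1\}$ and $\r(a)^r = \r(a)$. This gives $Y^{F^r}_{\vL'}(u) = Y^F_{\vL'}(u)$ for every $\vL'$.

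Combining these inputs, the difference $Q^G_{L_w, C_1, \SE_1, \vf_1^r}(u) - Q^G_{L_w, C_1, \SE_1, \vf_1}(u)$ equals
\begin{equation*}
\sum_{\vL,\vL'} \x^{\vL}(w)\bigl[q^{r(a_\vL+r_\vL)/2}p_{\vL,\vL'}(q^r) - q^{(a_\vL+r_\vL)/2}p_{\vL,\vL'}(q)\bigr] Y^F_{\vL'}(u).
\end{equation*}
Fermat's little theorem gives $q^r \equiv q \pmod r$, so $q^{rk} \equiv q^k \pmod r$ for all $k \geq 0$; applied termwise, $P(q^r) \equiv P(q) \pmod r$ for any $P(t) \in \BZ[t]$. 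Each bracketed coefficient is therefore divisible by $r$ in $\BZ$, hence so is the whole sum, proving the congruence.

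The main technical hurdle is the bookkeeping of the half-power $q^{(a_\vL+r_\vL)/2}$ combined with the Laurent monomial $q^{-a(\vL)}$ in $p_{\vL,\vL'}$: one must verify that their combined contribution, multiplied by $\wt K_{\vL,\vL'}(q)$, yields an integer polynomial in $q$ so that Fermat's substitution is applied legitimately and both $Q^G_{L_w,C_1,\SE_1,\vf_1}(u)$ and $Q^G_{L_w,C_1,\SE_1,\vf_1^r}(u)$ are genuine integers. The other subtle point is the computation of the $F^r$-cocycle as $a^r$, which depends essentially on $F$ acting trivially on $A(u_0)$; this is precisely where the restriction $G = Sp_{2n}$ is used, and where the argument would need modification for other classical groups.
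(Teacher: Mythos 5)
Your proposal is correct and follows essentially the same route as the paper: reduce via (5.3.3) and Theorem 5.9 to the coefficients $p_{\vL,\vL'}(t)\in\BZ[t]$, apply Fermat's little theorem, and compare $Y_{\vL'}$ for $F$ and $F^r$ through the cocycle computation $g\iv F^r(g)=\a F(\a)\cdots F^{r-1}(\a)$ together with the fact that $A_G(u)$ is an elementary abelian $2$-group and $r$ is odd. The only cosmetic difference is that you carry the factor $q^{(a_\vL+r_\vL)/2}$ explicitly through the whole computation, whereas the paper first establishes $X_{\vL}(u)\equiv X^{(r)}_{\vL}(u)\bmod r$ and then invokes (5.3.3).
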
 

\begin{proof}
Take $(C,\SE)$ and let $\vL$ be the corresponding symbol. 
Here $(C,\SE)$ is $F$-stable. 
We choose a split element $u\in C^F$, and define the function  
$Y_{\vL}$ on $G^F\uni$ given as in Theorem 5.7. We can also 
define the corresponding function on $G^{F^r}\uni$ by using the same 
$u$, which we denote by $Y^{(r)}_{\vL}$.  We note that
\par\medskip\noindent
(5.14.1) \  Assume that $r$ is odd.  Then for any $u \in C^F$, we have
$Y_{\vL}(u) = Y^{(r)}_{\vL}(u)$. 
\par\medskip
Here  $F$ acts trivially on $A_G(u)$, and 
for each $a \in A_G(u)$,  let $u_a$ be a representative of the $G^F$-class in 
$C^F$ corresponding to $a \in A_G(u)$.  
Then $u_a$ is obtained as $u_a = gug\iv$ for some $g \in G$ such that  
$\a = g\iv F(g) \in Z_G(u)$ is a lift of $a \in A_G(u) = Z_G(u)/Z_G^0(u)$ on $Z_G(u)$.   
Here we have
\begin{equation*}
g\iv F^r(g) = \a F(\a) \cdots F^{r-1}(\a) \in Z_G(u).
\end{equation*}
Since $F$ acts trivially on $A_G(u)$, the image of $g\iv F^r(g)$ 
coincides with $a^r \in A_G(u)$ under the map 
$Z_G(u) \to A_G(u)$. 
As $A_G(u) \simeq (\BZ/2\BZ)^c$ for some $c$, 
we see that $a^r = a$ in $A_G(u)$.  (5.14.1) follows from this. 
\par
Let $X_{\vL}$ be the function on $G^F\uni$ as in 5.3, and 
we denote by $X_{\vL}^{(r)}$ the corresponding function on $G^{F^r}\uni$.
By Theorem 5.9, we can write them as 
\begin{align*}
\tag{5.14.2}
X_{\vL} &= \sum_{\vL'}p_{\vL,\vL'}(q)Y_{\vL'}, \\
X^{(r)}_{\vL} &=  \sum_{\vL'}p_{\vL, \vL'}(q^r)Y^{(r)}_{\vL'}.  
\end{align*} 
Since $p_{\vL,\vL'}(t) \in \BZ[t]$, we have 
$p_{\vL,\vL'}(q) \equiv p_{\vL,\vL'}(q^r) \mod r$ by Fermat's little theorem. 
If $r$ satisfies the condition in the theorem, then $r$ is odd, and so 
(5.14.1) holds.  Thus by (5.14.2), we see that 
$X_{\vL}(u) \equiv X^{(r)}_{\vL}(u) \mod r$.   
The theorem now follows from (5.3.3).  
\end{proof}

\remark{5.15.}
Let $G$ be a connected reductive group, and 
$Q^G_{L,C_1, \SE_1, \vf_1}$ be the generalized Green function given in 5.1.
For each $F$-stable pair $\vL = (C,\SE) \in \SN_G$, the function $X_{\vL}$ can 
be defined by a similar formula as in (5.3.2), and (5.3.3) holds under 
a suitable modification (here and below we use symbols $\vL, \vL'$, etc. just to denote  
the elements in $\SN_G$). Lusztig's theorem (Theorem 5.5) gives, under this general
situation, an algorithm of computing $P = (p_{\vL,\vL'})$.  We note that
\par\medskip\noindent
(5.15.1) \ There exists a polynomial $p_{\vL, \vL'}(t) \in \BZ[t]$
such that $p_{\vL,\vL'}(q^r)$ coincides with $p_{\vL,\vL'}$ for $G^{F^r}$,
under the choice of $r$ satisfying a certain congruence condition. 
\par\medskip
In fact, $\Om$ can be defined in general by a similar formula as in (5.5.1), 
and it is regarded as a matrix of rational functions on $t$ evaluated at $q^r$ for 
$r$ satisfying the congruence condition. Then the equation $P\vL\,{}^t\!P = \Om$
determines $P$ uniquely in the level of rational functions, as far as this 
algorithm works well, namely if the division by 0 does not occur in each step.
But this is guaranteed by Theorem 5.5 which asserts that the evaluation at $t = q^r$ 
gives an algorithm  of computing $P$.         
Thus $p_{\vL,\vL'}(t) \in \BQ(t)$.  But we know by [L4, (24.5.2)] 
that $p_{\vL,\vL'} \in \BZ$. Hence (5.15.1) holds. 
\par
By using this result instead of Theorem 5.9, 
we can extend Theorem 5.14 to the case of special orthogonal groups as follows.
Let $G$ be a special orthogonal group. 
Then thanks to [S5, Cor. 4.4], a similar formula as in (5.14.1) holds for $G$ 
if $r$ is prime to the order of $G^{F^r}$.  Thus combined with (5.15.1), we have
\par\medskip\noindent
(5.15.2) \  Let $G$ be a special orthogonal group. 
Then the generalized Green function $Q^G_{L,C_1, \SE_1, \vf_1}$ 
satisfies a similar formula as in Theorem 5.14.

\par\bigskip

\par\vspace{1cm}
\noindent
T. Shoji \\
School of Mathematical Sciences, Tongji University \\ 
1239 Siping Road, Shanghai 200092, P. R. China  \\
E-mail: \verb|shoji@tongji.edu.cn|

\end{document}